\documentclass[a4paper,12pt]{article}
\usepackage{amsmath,amssymb,amsthm,graphics,latexsym,amsfonts}
\usepackage{fancyhdr}
\usepackage{color}
\usepackage{graphics}
\usepackage{epstopdf}
\usepackage{amssymb}
\usepackage{cite}
\usepackage{hyperref}
\hypersetup{
    colorlinks=true,
    linkcolor=cyan,
    filecolor=cyan,
    urlcolor=cyan,
    citecolor=cyan,
}

\title{\Large Odd spanning trees of a graph\,\thanks{Research supported by Open Project of Key Laboratory in Xinjiang Uygur Autonomous Region of China (2023D04026) and by NSFC (No. 12061073).}}
\author{ {Jingyu Zheng, Baoyindureng Wu \footnote{Corresponding author.
Email: baoywu@163.com (B. Wu) }}\\
\small  College of Mathematics and System Sciences, Xinjiang
University \\ \small  Urumqi, Xinjiang 830046, P.R.China \\}
\date{}

\newtheorem{theorem}{Theorem}[section]
\newtheorem{lemma}[theorem]{Lemma}
\newtheorem{corollary}[theorem]{Corollary}

\newtheorem{proposition}[theorem]{Proposition}

\usepackage{indentfirst}


\begin{document}

\maketitle {\small \noindent{\bfseries Abstract}
A graph $G=(V,E)$ is said to be odd (or even, resp.) if $d_G(v)$ is odd (or even, resp.) for any $v\in V$. Trivially, the order of an odd graph must be even. In this paper, we show that every 4-edge connected graph of even order has a connected odd factor. 
A spanning tree $T$ of $G$ is called a homeomorphically irreducible spanning tree (HIST by simply) if $T$ contains no vertex of degree two. Trivially, an odd spanning tree must be a HIST.
In 1990, Albertson, Berman, Hutchinson, and Thomassen showed that every connected graph of order $n$ with $\delta(G)\geq \min\{\frac n 2, 4\sqrt{2n}\}$ contains a HIST. 

We show that every complete bipartite graph with both parts being even has no odd spanning tree, thereby for any even integer $n$ divisible by 4, there exists a graph of order $n$ with the minimum degree $\frac n 2$ having no odd spanning tree. Furthermore, we show that every graph of order $n$ with $\delta(G)\geq \frac n 2 +1$ has an odd spanning tree. We also characterize all split graphs having an odd spanning tree. As an application, for any graph $G$ with diameter at least 4, $\overline{G}$ has a spanning odd double star. Finally, we also give a necessary and sufficient condition for a triangle-free graph $G$ whose complement contains an odd spanning tree. A number of related open problems are proposed.\\
{\bfseries Keywords:} Complements; Odd graphs; Spanning trees; Split graphs; Triangle-free graphs\\
{\bfseries Mathematics Subject Classification:} 05C05; 05C07

\section {\large Introduction}

All graphs considered here are simple and finite. For graph-theoretic notation not explained in this paper, we refer to \cite{Bondy}.
Let $G=(V(G), E(G))$ be a graph. The order and size of $G$ are often denoted by $v(G)$ and $e(G)$.
The set of neighbours of a vertex $v$ in a graph $G$ is denoted by $N_{G}(v)$. The degree of a vertex $v$, denoted by $d_G(v)$, is the number of edges incident with $v$ in $G$. Since $G$ is simple, $d_G(v)=|N_G(v)|$. The maximum and minimum degrees of $G$ are denoted by $\Delta(G)$ and $\delta(G)$, respectively. We say $G$ is {\it even} (or {\it odd}, resp.) if all vertices have degree even (or odd, resp.).
A spanning subgraph of $G$ is often called a {\it factor} of $G$. In particular, a spanning tree where the vertices are odd degrees is called an {\it odd spanning tree}.
For a set $S\subseteq V(G)$, $G-S$ denotes the graph obtained from $G$ by removing vertices of $S$ and all edges that are incident to a vertex of $S$. The subgraph $G-(V(G)\setminus S)$ is said to be the {\it induced subgraph} of $G$ induced by $S$, and
is denoted by $G[S]$. For $A, B\subseteq V(G)$, $E_G[A, B]$ denotes the set of edges with one end in $A$ and the other end in $B$, and $e_G(A, B)=|E_G[A,B]|$.
The distance of two vertices $u$ and $v$, denoted by $d_G(u,v)$, is the length of shortest path joining $u$ and $v$ in $G$. The diameter of $G$, denoted by $diam(G)$, is $\max\{d_G(u,v): u, v\in V(G)\}$.

For a graph $G$, its complement $\overline{G}$ is the graph with $V(G)$, in which two vertices are adjacent if and only if they are not adjacent in $G$.
A bipartite graph with bipartition $(X, Y)$ is denoted by $G[X,Y]$. In particular, $K_{m,n}$ denotes the complete bipartite graph with $m$ and $n$ vertices in its two parts.
A triangle-free graph is one which contains no triangles.

Gallai (see \cite{Lovasz1979}, Section 5, Problem 17) proved that the vertices of any graph can be partitioned into two sets, each of which induces a subgraph with all degrees even; the vertices also can be partitioned into two sets so that one set induces a subgraph with all degrees even and the other induces a subgraph with all degrees odd. We refer to \cite{Berman1997, Berman, Caro19941, Caro19942, Ferber2021, Gutin2016, Hou2018, Radcliff1995, Rao2022, Scott1992, Wang2023, WangWu2024} for the large odd induced subgraph of a graph. Scott \cite{Scott2001} showed that the following theorem.

\begin{theorem} [\cite{Scott2001}]\label{Theorem1.1}
Every connected graph of even order has a vertex partition into sets inducing subgraphs with all degrees odd.
\end{theorem}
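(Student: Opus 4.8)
The approach I would take is a single induction on $|V(G)|$, proving simultaneously a companion statement for graphs of odd order. Write (a): if $G$ is connected of even order then $V(G)$ partitions into sets each inducing a subgraph with all degrees odd; and (b): if $G$ is connected of odd order and $v\in V(G)$, then $V(G)$ has a partition $V_0\cup V_1\cup\cdots\cup V_k$ with $v\in V_0$ such that $G[V_0]$ has all degrees odd except for $v$, which has even degree, while $G[V_i]$ has all degrees odd for every $i\ge 1$. The point of (b) is that $v$ becomes an ``open slot'': a vertex of even degree in an otherwise all-odd part, which can later be absorbed from the outside by a single new edge at $v$ and thereby corrected to odd degree without disturbing the parity of any other vertex. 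The base cases $|V(G)|\le 2$ are $K_2$ (all degrees odd, one part) and the single vertex, which has degree $0$ and so is its own slot.

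For the inductive step of (a) with $|V(G)|\ge 4$: if $G$ has a leaf $v$ with neighbour $u$, apply (b) to $G-v$ with distinguished vertex $u$ and move $v$ into $u$'s part --- $u$'s degree goes from even to odd, $v$ has degree $1$, nothing else changes. If $G$ has a cut vertex $v$, then $G-v$ has an odd number of components of odd order (as $|V(G)|-1$ is odd), so some component $C$ has odd order; apply (a) to $G[V(C)\cup\{v\}]$ and (b) to $G-V(C)$ with distinguished vertex $v$, and merge the two parts containing $v$: that vertex picks up an odd number of neighbours from the first side and an even number from the second, hence odd degree in the merged part, and since $V(C)$ meets the rest of $G$ only at $v$, no other vertex is affected. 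The step for (b) is parallel: a non-cut vertex $v$ takes $\{v\}$ as its own part and (a) handles the connected even-order graph $G-v$; a cut vertex $v$ uses the same split-and-merge, with parities arranged so $v$ picks up an even total. The one case that is not a routine reduction is (a) for a $2$-connected graph of even order (so at least $4$ vertices), and here I would use the Lemma below to choose an edge $xz$ with $G-\{x,z\}$ connected, apply (a) to that smaller connected even-order graph, and adjoin the part $\{x,z\}$, which induces a $K_2$.

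The crux is therefore the Lemma: every $2$-connected graph $G$ with at least three vertices has an edge $xz$ such that $G-\{x,z\}$ is connected. I would fix any $x$ and locate a neighbour $z$ of $x$ that is a non-cut vertex of the connected graph $G-x$, so that $G-\{x,z\}=(G-x)-z$ remains connected. If $G-x$ has no cut vertex, any neighbour of $x$ works; otherwise take an end-block $B$ of $G-x$ with its unique cut vertex $c$ --- were $x$ to have no neighbour in $V(B)\setminus\{c\}$, then $c$ would be a cut vertex of $G$ separating the nonempty set $V(B)\setminus\{c\}$ from $x$, contradicting $2$-connectivity, so $x$ has a neighbour $z$ in $V(B)\setminus\{c\}$, and every such vertex is a non-cut vertex of $G-x$. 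The two things that need real care, then, are this structural lemma and the parity bookkeeping at every merge: checking that the distinguished vertex ends up with the intended parity and that gluing two parts along a common vertex never flips anyone else's internal degree --- which is precisely why the cut-vertex reductions split $G-v$ at one component rather than more crudely. Gallai's even/odd vertex-partition theorem quoted above is not needed here, though it offers an alternative handle on the base of the induction.
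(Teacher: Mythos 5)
Your argument is correct, but there is nothing in the paper to compare it against: Theorem~1.1 is stated with a citation to Scott's paper and is used as a black box; the authors give no proof of it. Judged on its own, your proof is sound. The strengthening to the companion statement (b) --- permitting one distinguished vertex of even degree in an odd-order connected graph, to be repaired later by a merge --- is exactly the right inductive hypothesis, and the parity bookkeeping works: in the cut-vertex step for (a) the distinguished vertex receives an odd contribution from the even-order side and an even contribution from the odd-order side, and since the two sides meet only at $v$ no other vertex's induced degree changes; the existence of an odd-order component of $G-v$ follows from the fact that the number of such components is congruent to $|V(G)|-1$ modulo $2$. The step for (b) at a cut vertex, which you state only in outline, does need its two subcases made explicit --- if $G-v$ has an even-order component $C$, apply (b) to both $G[V(C)\cup\{v\}]$ and $G-V(C)$ and merge (even $+$ even); if every component has odd order, there are at least two of them, so pick one, apply (a) to both sides (each now has even order) and merge (odd $+$ odd) --- but both subcases go through. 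The non-separating-edge lemma for $2$-connected graphs is standard and your end-block argument for it is the usual correct one; note also that adjoining the part $\{x,z\}$ cannot disturb the partition of $G-\{x,z\}$, since induced subgraphs on subsets of $V(G)\setminus\{x,z\}$ are unchanged. In short: a complete, self-contained, elementary proof of the cited theorem, which is more than the paper itself provides.
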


The above theorem indicates that every connected graph of order even has an odd factor. It is natural to ask that when a graph has a connected odd factor or particularly, has an odd spanning tree.
These are exactly our main subjects here. We show that every 4-edge connected graph has a connected odd factor.

Recall that a spanning tree $T$ of $G$ is called a homeomorphically irreducible spanning tree (HIST by simply) if $T$ contains no vertex of degree two. Trivially, an odd spanning tree must be a HIST. The existence of a HIST in a graph is extensively studied in literature \cite{Albertson1990, Chen2012, Chen2013, Hoffmann-Ostenhof2018, Diemunsch2015, Furuya2020, Ito2020, Ito2022, Shan2023, Zhai2019}.
In particular, it was shown that every connected graph $G$ of order $n$ has a HIST if one of the following conditions is satisfied:

(1) $\delta(G)\geq 4\sqrt{2n}$, or $\delta(G)\geq \frac n 2$ and $n\geq 4$, by Albertson, Berman, Hutchinson, Thomassen \cite{Albertson1990};

(2) $n\geq 8$ and $d(u)+d(v)\geq n-1$ for any two nonadjacent vertices $u, v$, by Ito and Tsuchiya \cite{Ito2022};

(3) graphs with diameter two except a well-defined families of graphs, by Shan and Tsuchiya \cite{Shan2023};

(4) connected and locally connected graphs of order $n\geq 3$, by Chen, Ren and Shan \cite{Chen2012}.

Motivated by the above results, we show that every graph of even order $n$ with $\delta(G)\geq \frac n 2 +1$ has an odd spanning tree. This is best possible, in sense that there is a graph of order $n$ divisible by 4 with $\delta(G)=\frac n 2$ has no odd spanning tree. We also characterize all split graphs having an odd spanning tree. As an application, for any graph $G$ with diameter at least 4, $\overline{G}$ has a spanning odd double star. Finally, we also give a necessary and sufficient condition for a triangle-free graph $G$ whose complement contains odd spanning tree. In addition, some interesting by-products are also detected: a connected graph $G$ is bipartite if and only if any two spanning trees of $G$ have the same bipartition; the complement of a triangle-free graph $G$ is connected if and only if $G$ is not a complete bipartite graph.

\section{\large Connected odd factors and odd spanning trees}

First recall a well-known theorem for a graph having $k$ edge-disjoint spanning trees, due to Nash-Williams and Tutte.

\begin{theorem}[\cite{Nash-Williams1961, Tutte1961}]\label{Theorem2.1}
A graph $G$ has $k$ edge-disjoint spanning trees if and only if$$|E_G(P)|\geq k(|P|-1)$$for every partition $P$ of $V(G)$, where $E_G(P)$  denotes the set of edges of $G$  joining different parts of $P$.
\end{theorem}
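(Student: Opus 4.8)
The plan is to establish the two implications separately; necessity is routine, and the substance lies in the sufficiency direction. For necessity, assume $G$ has $k$ pairwise edge-disjoint spanning trees $T_1,\dots,T_k$, and let $P=\{V_1,\dots,V_r\}$ be an arbitrary partition of $V(G)$. Contracting each class $V_i$ to a single vertex turns each $T_j$ into a connected multigraph on $r$ vertices, since identifying vertex sets can never disconnect a connected graph; hence it has at least $r-1$ non-loop edges, and these are precisely the edges of $T_j$ joining two distinct classes of $P$. Thus $|E_{T_j}(P)|\ge r-1$ for every $j$, and edge-disjointness gives $|E_G(P)|\ge\sum_{j=1}^{k}|E_{T_j}(P)|\ge k(r-1)=k(|P|-1)$.

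For sufficiency I would work in the cycle matroid $M=M(G)$ and invoke the matroid base-packing theorem (the standard corollary of the Nash--Williams matroid union theorem): $M$ has $k$ pairwise disjoint bases if and only if $|E(G)\setminus F|\ge k\bigl(r_M(E(G))-r_M(F)\bigr)$ for every $F\subseteq E(G)$. Write $n=|V(G)|$. First, $G$ must be connected when $k\ge 1$: otherwise the partition $P$ of $V(G)$ into the components of $G$ has $E_G(P)=\emptyset$ yet $|P|\ge 2$, contradicting the hypothesis (the case $k=0$ being trivial). So $r_M(E(G))=n-1$, every base of $M$ is a spanning tree of $G$, and for any $F\subseteq E(G)$ one has $r_M(F)=n-c(F)$, where $c(F)$ is the number of components of the spanning subgraph $(V(G),F)$. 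Given such an $F$, let $P_F$ be the partition of $V(G)$ into the vertex sets of the components of $(V(G),F)$; then $|P_F|=c(F)$, every edge joining two classes of $P_F$ lies in $E(G)\setminus F$, and hence $|E(G)\setminus F|\ge |E_G(P_F)|\ge k(|P_F|-1)=k\bigl(r_M(E(G))-r_M(F)\bigr)$ by the hypothesis. The base-packing criterion is therefore met, $M$ has $k$ disjoint bases, and these are exactly $k$ edge-disjoint spanning trees of $G$.

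The only genuine obstacle is the base-packing theorem itself. If a self-contained treatment is preferred, I would instead fix $k$ pairwise edge-disjoint forests $F_1,\dots,F_k$ in $G$ with $\sum_{i}|E(F_i)|$ maximum, assume some $F_i$ is not a spanning tree, and run the standard augmenting-sequence argument on the $F_i$ (the graphic case of matroid union): a suitable sequence of edge exchanges among the forests either strictly increases $\sum_i|E(F_i)|$, contradicting maximality, or else the set of vertices it reaches forms a partition $P$ of $V(G)$ with $|E_G(P)|<k(|P|-1)$, contradicting the hypothesis. In either version the delicate point is the same: organizing the exchanges and reading off the offending partition when no augmentation exists; the remainder is bookkeeping.
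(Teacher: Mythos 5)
The paper states this result as a classical theorem of Nash--Williams and Tutte and gives no proof of it, so there is no in-paper argument to compare against; I assess your proposal on its own terms. Your necessity direction is complete and correct: contracting the classes of $P$ keeps each $T_j$ connected, so each contributes at least $|P|-1$ cross-edges, and edge-disjointness lets you sum these disjoint contributions inside $E_G(P)$. Your sufficiency direction is a valid reduction to matroid base packing: you correctly note that $G$ must be connected (else the partition into components violates the hypothesis), compute $r_M(F)=n-c(F)$ in the cycle matroid, observe that the component partition $P_F$ satisfies $E_G(P_F)\subseteq E(G)\setminus F$ with $|P_F|-1=r_M(E(G))-r_M(F)$, and so the hypothesis delivers exactly the base-packing inequality $|E(G)\setminus F|\ge k\bigl(r_M(E(G))-r_M(F)\bigr)$. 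The one caveat is that the base-packing theorem you invoke is, in its graphic case, essentially the theorem being proved, so the argument is only as self-contained as one's willingness to cite the matroid union theorem as a black box; your sketched augmenting-exchange alternative is the correct outline of an elementary proof, but as written it is a plan rather than a proof --- extracting the offending partition from a failed augmentation is precisely where all the work lies. As a demonstration that the partition criterion is equivalent to the base-packing criterion for graphic matroids, however, the argument is sound and complete.
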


For two subgraphs $F$ and $H$ of a graph $G$, $F\bigtriangleup H$ is the spanning subgraph of $G$ with edge set $E(F)\bigtriangleup E(H)$.

\begin{theorem}\label{Theorem2.2}
Every 4-edge-connected graph $G$ of even order has a connected odd factor.
\end{theorem}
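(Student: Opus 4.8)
The plan is to produce the factor from two edge-disjoint spanning trees, using one of them as a rigid connected skeleton and the other to correct parity. First I would observe that a $4$-edge-connected graph has two edge-disjoint spanning trees: for any partition $P=\{V_1,\dots,V_t\}$ of $V(G)$, each part $V_i$ with $\emptyset\neq V_i\neq V(G)$ satisfies $e_G(V_i,V(G)\setminus V_i)\ge 4$, so $|E_G(P)|=\tfrac12\sum_{i} e_G(V_i,V(G)\setminus V_i)\ge 2t\ge 2(t-1)$, and Theorem~\ref{Theorem2.1} (with $k=2$) yields edge-disjoint spanning trees $T_1,T_2$ of $G$.

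Next, let $W=\{v\in V(G):d_{T_1}(v)\text{ is even}\}$. Since $|V(G)|$ is even and the number of odd-degree vertices of $T_1$ is even (handshake lemma), $|W|$ is even. Pair the vertices of $W$ arbitrarily as $\{x_1,y_1\},\dots,\{x_k,y_k\}$, let $P_i$ be the unique $x_i$--$y_i$ path in the tree $T_2$, and set $J=P_1\triangle\cdots\triangle P_k$, a subgraph of $T_2$. For any vertex $v$, its degree in $J$ is odd if and only if $v$ is an endpoint of an odd number of the $P_i$; since the pairs are disjoint this happens exactly when $v\in W$. Now let $H=T_1\cup J$. Because $T_1$ and $T_2$ are edge-disjoint, $E(T_1)$ and $E(J)$ are disjoint, so $d_H(v)=d_{T_1}(v)+d_J(v)$, which is odd for every $v$: even plus odd when $v\in W$, odd plus even otherwise. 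Finally $H\supseteq T_1$, so $H$ is a connected spanning subgraph of $G$, hence a connected odd factor. (If $W=\emptyset$ then $T_1$ itself is already an odd spanning tree.)

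The conceptual step is choosing to work with two edge-disjoint spanning trees and to repair parity by a $W$-join inside the second one; after that there is no real obstacle. The points that need careful (but routine) checking are: that a symmetric difference of paths is a subgraph whose set of odd-degree vertices is the symmetric difference of the endpoint pairs, and that the edge-disjointness of $T_1$ and $T_2$ is genuinely used, so that degrees simply add when forming $T_1\cup J$. I would also remark that this argument in fact only needs the hypothesis that $G$ has two edge-disjoint spanning trees, which $4$-edge-connectivity guarantees but does not characterize; this points toward the natural question of the weakest connectivity-type assumption forcing a connected odd factor.
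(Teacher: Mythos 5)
Your proof is correct and follows essentially the same route as the paper: obtain two edge-disjoint spanning trees via the Nash--Williams--Tutte theorem, pair the even-degree vertices of $T_1$ (an even number of them, since the order of $G$ is even), and fix their parity with a symmetric difference of paths taken inside $T_2$. Your $H=T_1\cup J$ coincides with the paper's $T_1\bigtriangleup P_1\bigtriangleup\cdots\bigtriangleup P_k$ because of edge-disjointness; you simply make explicit the routine verifications (the partition condition for $k=2$, and the parity bookkeeping) that the paper leaves implicit.
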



\begin{proof} Let $G$ be 4-edge-connected graph of even order.
By \hyperref[Theorem2.1]{Theorem 2.1}, $G$ has two edge-disjoint spanning trees $T_1$ and $T_2$. If $T_1$ or $T_2$ is odd, we are done. So, let $\{x_1, x_2, \ldots, x_k, y_1, y_2, \ldots, y_k\}$ be the set of all vertices of even degree in $T_1$, where $k$ is a positive integer.
One can find a unique path $P_i$ in $T_2$ joining $x_i$ and $y_i$ for each $i$. Clearly,
$T_1'=T_1\bigtriangleup P_1\bigtriangleup P_2\bigtriangleup \ldots \bigtriangleup P_k$ is a connected odd factor of $G$, as we desired.
\end{proof}

Recall that a graph is bipartite if and only if it contains no odd cycle. Next, we give another interesting criterion for a graph being bipartite, which will be used in the proof of the next proposition.

\begin{lemma}\label{Lemma2.3}
A connected graph $G$ is bipartite if and only if any two spanning trees of $G$ have the same bipartition.
\end{lemma}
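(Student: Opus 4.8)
The plan is to prove both implications directly; the easy direction is "if", and the substantive one is the contrapositive of "only if", namely: if $G$ is not bipartite, then it has two spanning trees with different bipartitions. Throughout, recall that a connected bipartite graph has a unique bipartition of its vertex set, so the phrase ``the bipartition of a spanning tree $T$'' is well defined.

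For the ``if'' direction, suppose $G$ is bipartite with bipartition $(X,Y)$. Any spanning tree $T$ is a connected bipartite spanning subgraph of $G$, and since $G$ has no edge inside $X$ or inside $Y$, neither does $T$; hence $(X,Y)$ is a bipartition of $T$, and by uniqueness it is \emph{the} bipartition of $T$. Thus every spanning tree of $G$ has bipartition exactly $(X,Y)$, so in particular any two of them agree.

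For the converse, assume $G$ is not bipartite, so $G$ contains an odd cycle $C$. Fix an edge $e=xy$ of $C$ and let $P=C-e$, a path of even length joining $x$ and $y$. Since $P$ is acyclic it extends to a spanning tree $T$ of $G$ with $P\subseteq T$ and $e\notin E(T)$; then the fundamental cycle of $e$ with respect to $T$ is $C$, which is odd, so the $x$–$y$ path in $T$ has even length and $x,y$ lie on the \emph{same} side of the bipartition $\{A,B\}$ of $T$. Now pick any edge $f$ on the $x$–$y$ path of $T$ and set $T'=T+e-f$, again a spanning tree. Deleting $f$ from $T$ partitions $V(G)$ into the vertex sets $U_x\ni x$ and $U_y\ni y$ of the two components, and in $T'$ the edge $e=xy$ reconnects them; since $xy\in E(T')$, the vertices $x$ and $y$ now lie on \emph{opposite} sides of the bipartition $\{A',B'\}$ of $T'$. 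Tracking the colourings, $T'$ has the same bipartition as $T$ on $U_x$ and the swapped one on $U_y$; choosing labels so that $x\in A$ and $x\in A'$, this gives $A'=(A\cap U_x)\cup(B\cap U_y)$. A short case analysis now finishes the argument: if $A'=A$ then $B\cap U_y=A\cap U_y$, forcing $U_y=\emptyset$; if $A'=B$ then $A\cap U_x=B\cap U_x$, forcing $U_x=\emptyset$; both are impossible since $y\in U_y$ and $x\in U_x$. Hence $\{A,B\}\neq\{A',B'\}$, so $T$ and $T'$ have different bipartitions.

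The one step needing genuine care is verifying that flipping the sides of the vertices of $U_y$ really yields a \emph{different} unordered bipartition of $V(G)$ — that is, the two-line case analysis ruling out $A'=A$ and $A'=B$. The remaining ingredients (constructing a spanning tree through a prescribed odd cycle so that the missing chord has an odd fundamental cycle, and the uniqueness of the bipartition of a connected bipartite graph) are routine.
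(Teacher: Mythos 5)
Your proof is correct, and for the substantive implication it takes a genuinely different --- and in fact more complete --- route than the paper. Both proofs handle ``$G$ bipartite $\Rightarrow$ all spanning trees share the bipartition of $G$'' the same way, via the uniqueness of the bipartition of a connected bipartite spanning subgraph. The divergence is in the converse. The paper's ``sufficiency'' paragraph starts from two spanning trees with \emph{different} bipartitions, picks $u,v$ on the same side of one tree and on opposite sides of the other, and observes that the union of the two tree paths is an odd closed walk, hence $G$ has an odd cycle; but this establishes ``different bipartitions $\Rightarrow$ $G$ not bipartite,'' which is just the contrapositive of the first direction, so the implication ``all spanning trees agree $\Rightarrow$ $G$ bipartite'' is not actually proved there. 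Your argument supplies exactly the missing ingredient: from an odd cycle $C$ with edge $e=xy$ you grow a spanning tree $T\supseteq C-e$ (which automatically excludes $e$), so that $x$ and $y$ lie on the same side of $T$, and the exchange $T'=T+e-f$ forces $x$ and $y$ onto opposite sides of $T'$. Your two-case verification that $\{A',B'\}\neq\{A,B\}$ is sound, though it can be shortened: if the two unordered bipartitions coincided, then $x$ and $y$ would lie on the same side of one tree if and only if they lie on the same side of the other, which fails here. Aside from the harmless swap of the labels ``if''/``only if'' at the start (the content of both implications is proved), everything checks out.
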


\begin{proof}

First assume that $G=G[X,Y]$ is a connected bipartite graph. The necessity follows from the fact that all connected spanning subgraph of $G$ have the same bipartition as $G$ does.

To prove the sufficiency, let $T_1$ and $T_2$ be two spanning trees of $G$ have different bipartitions.
It follows that there exist two vertices $u$ and $v$ which lie in the same part of $T_1$ and lie in the distinct parts of $T_2$ (if necessary, we may change the role of $T_1$ and $T_2$). Let $P_i$ be the unique path joining $u$ and $v$ in $T_i$ for each $i\in \{1, 2\}$. Clearly, $P_1\cup P_2$ is a closed walk with length being odd. So, $P_1\cup P_2$ contains an odd cycle and thus $G$ has an odd cycle, contradicting the assumption that $G$ is bipartite.
\end{proof}

\begin{proposition}\label{Proposition2.4}
Let $G=G[X,Y]$ be a bipartite graph. If both $|X|$ and $|Y|$ are even, then $G$ has no odd spanning tree. In particular, $K_{2s, 2t}$ has no odd spanning tree for any two integers $s, t\geq 1$.
\end{proposition}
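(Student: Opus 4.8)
The plan is a short parity argument obtained by counting, in a putative odd spanning tree, the edge–endpoint incidences on one side of the bipartition. Let $T$ be any spanning tree of $G=G[X,Y]$. Since $T$ is a spanning subgraph of the bipartite graph $G$, every edge of $T$ has exactly one endpoint in $X$, so
$$\sum_{x\in X} d_T(x) = e_T(X,Y) = |E(T)| = |X|+|Y|-1,$$
the last equality because $T$ is a tree on $|X|+|Y|$ vertices. (Alternatively, that $T$ carries the bipartition $(X,Y)$ is \hyperref[Lemma2.3]{Lemma 2.3}.)

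Reading this identity modulo $2$: as $|X|$ and $|Y|$ are even, the right-hand side is odd, so an odd number of the summands $d_T(x)$ are odd. Since $|X|$ is even, it follows that an odd --- in particular nonzero --- number of vertices $x\in X$ satisfy $d_T(x)$ even. Hence $T$ has a vertex of even degree, so $T$ is not an odd spanning tree. As $T$ was arbitrary, $G$ has no odd spanning tree. The ``in particular'' clause is then immediate, since $K_{2s,2t}$ is bipartite with parts of sizes $2s$ and $2t$, both even, so the statement just proved applies verbatim.

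I expect no genuine obstacle here: the whole content lies in the two elementary observations $\sum_{x\in X} d_T(x)=|E(T)|$ (because $T\subseteq G$ is bipartite with all edges between $X$ and $Y$) and $|E(T)|=|V(T)|-1$, after which the conclusion is pure parity. The only point worth keeping honest is that an odd spanning tree of $G$ automatically respects the bipartition $(X,Y)$, which holds simply because $T$ is a subgraph of $G$; one could equally invoke \hyperref[Lemma2.3]{Lemma 2.3} for this.
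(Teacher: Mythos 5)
Your proposal is correct and is essentially the paper's own argument: both rest on the identity $\sum_{x\in X} d_T(x)=|X|+|Y|-1$ for a spanning tree $T$ respecting the bipartition, followed by the same parity contradiction when $|X|$ and $|Y|$ are even. The only cosmetic difference is that you justify the identity directly from $T\subseteq G$ (all edges of $T$ go between $X$ and $Y$), whereas the paper cites Lemma~2.3 for the same fact; your version is, if anything, slightly more self-contained.
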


\begin{proof}
Suppose that $G$ has an odd spanning tree $T$. By \hyperref[Lemma2.3]{Lemma 2.3}, $T$ and $G$ have the same bipartition $(X, Y)$. In one hand, $\sum_{x\in X} d_T(x)=|X|+|Y|-1\equiv 1 ~ (\hspace{-2.5mm}\mod 2)$. On the other hand, since $d_T(x)$ is odd for each $x\in X$ and $|X|$ is even, we have $\sum_{x\in X} d_T(x)\equiv 0 ~ (\hspace{-2.5mm}\mod 2)$. This is a contradiction.
\end{proof}

\begin{proposition}\label{Proposition2.5}
Let two graphs $G_1$ and $G_2$ be vertex-disjoint.
Let $G$ be the graph obtained by joining a vertex of $G_1$ to a vertex of $G_2$. If both $G_1$ and $G_2$ have order even, then $G$ has no odd spanning tree. In particular, the result holds for $G_1=K_s$ and $G_2=K_t$, where both $s$ and $t$ are even.
\end{proposition}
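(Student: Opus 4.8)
The plan is to argue by contradiction with a parity count, in the same spirit as the proof of \hyperref[Proposition2.4]{Proposition 2.4}. Suppose $G$ has an odd spanning tree $T$, and let $e=xy$ be the edge that was added to join $G_1$ and $G_2$, where $x\in V(G_1)$ and $y\in V(G_2)$. Since $G$ has a spanning tree it is connected, so in particular $G_1$ and $G_2$ are connected; moreover $e$ is the only edge between $V(G_1)$ and $V(G_2)$, hence $e$ is a bridge of $G$.

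The key observation is that every spanning tree of a connected graph contains all of its bridges, so $e\in E(T)$. Removing $e$ from $T$ therefore yields exactly two components: a tree $T_1$ with vertex set $V(G_1)$ and a tree $T_2$ with vertex set $V(G_2)$ (each $T_i$ is a spanning tree of $G_i$). Passing from $T$ to $T_1$ only affects the degree of $x$: every vertex of $V(G_1)\setminus\{x\}$ has the same degree in $T_1$ as in $T$, while $d_{T_1}(x)=d_T(x)-1$.

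Now I carry out the parity count on $T_1$. Since $T$ is odd, every vertex of $V(G_1)\setminus\{x\}$ has odd degree in $T_1$, whereas $x$ has degree $d_T(x)-1$, which is even. Thus the set of odd-degree vertices of $T_1$ is precisely $V(G_1)\setminus\{x\}$, which has cardinality $|V(G_1)|-1$; as $|V(G_1)|$ is even, this is odd, contradicting the handshake lemma that the number of odd-degree vertices of a graph is even. Hence $G$ has no odd spanning tree. The ``in particular'' assertion follows at once, since $K_s$ and $K_t$ have even order when $s$ and $t$ are even. I do not anticipate a genuine obstacle here; the only point requiring care is the justification that the joining edge lies in every spanning tree, i.e.\ that it is a bridge, which is immediate from the construction of $G$.
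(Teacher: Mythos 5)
Your proof is correct and follows essentially the same route as the paper: both arguments observe that the joining edge $e$ must lie in $T$, delete it, and derive a contradiction from the fact that the resulting component on $V(G_1)$ would have an odd number of odd-degree vertices. You simply spell out the bridge justification and the handshake-lemma parity count more explicitly than the paper does.
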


\begin{proof} Let $T$ be an odd spanning tree of $G$. Clearly, $T$ contains the edge $e$ joining $G_1$ and $G_2$. Without loss of generality, the order $s$ of $G_1$ is even.
Let $T_1$ be the component of $T-e$ contained in $G_1$. However, $s-1$ is odd, but $T_1$ has $s-1$ vertices of degree odd, a contradiction.
\end{proof}

%
%

By \hyperref[Proposition2.4]{Proposition 2.4}, $K_{\frac n 2 , \frac n 2}$ has no odd spanning tree if $n$ is divisible by 4. This indicates that $\delta(G)\geq \frac n 2 +1$ in the following theorem is best possible for any $n$ divisible by 4. In addition, by \hyperref[Proposition2.5]{Proposition 2.5}, for any positive even number $n$, there exists a connected graph $G_n$ of order $n$ with $\delta(G_n)\geq \frac n 2-1$ having no odd spanning tree.

\vspace{2mm} For a vertex subset $X$ of a graph $G$ and $y\in V(G)$, let $N_X(y)=N_G(y)\cap X$.

\begin{theorem}\label{Theorem2.6}
 Let $n$ be a positive even number. If $G$ is a connected graph of order $n$ with $\delta(G)\geq \frac{n}{2}+1$, then $G$ has an odd spanning tree.
\end{theorem}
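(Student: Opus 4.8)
The plan is to build an odd spanning tree directly, exploiting the high minimum degree $\delta(G)\ge \frac n2+1$ to guarantee the structural room we need. The first move is to find a spanning ``double-star-like'' skeleton or, more robustly, a small dominating set that we can blow up into a tree. Concretely, I would first pick a spanning tree $T_0$ of $G$ and try to repair it by path-symmetric-difference operations in the spirit of the proof of Theorem~\ref{Theorem2.2}: every vertex of even $T_0$-degree can have its parity flipped by XOR-ing a well-chosen path. The danger is that unlike the $4$-edge-connected case we do not have a second edge-disjoint spanning tree to route these correction paths through, so the paths may interfere. To control interference I would instead aim for a much more rigid target: show $G$ has a spanning tree of diameter at most $3$ (a double star) or at most $4$, since for such trees the parity of each leaf is forced to be $1$ automatically and only the (few) internal vertices need their degrees adjusted.

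The cleanest route, I expect, is the following. Since $\delta(G)\ge \frac n2+1$, any two vertices have at least two common neighbours, so $G$ has diameter $2$ and is in fact quite dense. I would try to produce a spanning double star: pick an edge $uv$ and ask whether $N(u)\cup N(v)=V(G)$; by the degree bound $|N(u)|+|N(v)|\ge n+2$, so if $u\sim v$ then $|N(u)\cup N(v)|\ge |N(u)|+|N(v)|-|N(u)\cap N(v)|$, which need not be all of $V$, but with a counting/averaging argument over all edges one can find $uv$ with $N[u]\cup N[v]=V(G)$. Assign each remaining vertex to $u$ or $v$ as a leaf; then $d_T(u)=1+|L_u|$ and $d_T(v)=1+|L_v|$ with $|L_u|+|L_v|=n-2$. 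We need both $1+|L_u|$ and $1+|L_v|$ odd, i.e.\ both $|L_u|$, $|L_v|$ even; since $|L_u|+|L_v|=n-2$ is even this is a consistent parity requirement, and we can move a vertex from one side to the other (using common neighbours, which exist since $\delta$ is large) to fix the parity whenever some vertex is adjacent to both $u$ and $v$. The key sub-lemma to nail down is: one can always choose the dominating edge $uv$ so that the set of vertices adjacent to both $u$ and $v$ is nonempty (indeed large), which follows because a vertex outside $N(u)\cap N(v)$ adjacent to only one of them is rare when degrees exceed $n/2$.

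I anticipate the genuine obstacle is the case where no spanning double star exists or where the parity cannot be balanced by a single swap — for instance when the ``swap-eligible'' common-neighbour set has the wrong parity structure, or degenerate small cases. To handle these I would fall back to spanning trees of depth $2$ rooted at a single vertex $r$: take $r$ of maximum degree, let its neighbours be the middle layer, and hang every remaining vertex off some middle-layer vertex it is adjacent to (possible since $\delta\ge \frac n2+1$ forces every non-neighbour of $r$ to have a neighbour in $N(r)$, in fact many). Now $d_T(r)=d_G(r)$ is not controllable, but we \emph{can} control the middle-layer degrees by how we distribute the bottom leaves, and we can also promote leaves to the middle layer or re-root. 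The parity bookkeeping here is a system of congruences: leaves must be odd (forced, $=1$, fine), middle vertices $w$ need $1+(\text{children of }w)$ odd, root needs its degree odd. With $\delta$ large there is enough slack — each bottom vertex has $\ge \frac n2+1-|V\setminus N(r)|$ choices of middle parent — to solve all these congruences greedily, and if the root's own degree has the wrong parity we adjust by adding one chord among $N(r)$ (turning $T$ into an odd \emph{factor}, then pruning) or by re-rooting at a neighbour. Carefully organizing this greedy parity-correction so that fixing one vertex never unfixes another is the main technical content; everything else is the density estimate $\delta(G)\ge \frac n2+1 \Rightarrow$ every vertex has many neighbours in any large set.
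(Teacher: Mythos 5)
Your proposal is a plan with two routes, and each has a genuine gap; neither is carried to completion. The first route (a spanning odd double star) fails already at the existence step: under $\delta(G)\geq \frac n 2+1$ there need not be any dominating edge at all. For example, let $\overline{G}$ be the Clebsch graph ($5$-regular on $16$ vertices, strongly regular with every pair of non-adjacent vertices having exactly two common neighbours); then $G$ is $10$-regular of order $n=16$, so $\delta(G)=\frac n 2 +2$, yet for every edge $uv$ of $G$ there is a vertex $w$ adjacent to both $u$ and $v$ in $\overline{G}$, i.e.\ $w\notin N_G[u]\cup N_G[v]$. So no averaging over edges can produce the dominating edge your construction needs, and the double-star route collapses for such graphs. (This is consistent with the paper, which obtains spanning odd double stars in complements only under the much stronger hypothesis $diam(G)\geq 4$ in Corollary 3.2.)

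That forces everything onto your fallback route (a depth-$2$ tree rooted at $r$ with greedy parity correction), which is exactly the part you leave unproved. Two concrete holes: (i) when $G$ is even and regular, $d_G(r)$ is even for every choice of root, and your fix --- ``adding one chord among $N(r)$\ldots then pruning'' --- does not describe an operation that returns a tree; you would instead have to demote a neighbour of $r$ to the bottom layer, which changes the parity of its new parent and restarts the bookkeeping. (ii) Reassigning a bottom vertex from one middle parent to another flips the parities of \emph{both} parents simultaneously, so ``fixing one vertex never unfixes another'' is precisely the assertion that a certain parity system on the middle layer is solvable; you give no invariant or potential function that makes the greedy process terminate. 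The paper sidesteps all of this with a different, self-contained argument: fix a vertex $v$ of odd degree (first deleting one edge $vv'$ if $G$ is even), take a maximum odd subtree $T$ with $d_T(v)=d_G(v)$, and show by a two-round edge-counting estimate between $V(T)$ and $V(G)\setminus V(T)$ that the leftover set has at most two vertices, which are then absorbed via a common neighbour. To salvage your approach you would need to formalize the depth-$2$ parity correction as a comparable counting or exchange argument; as written, the proposal is a sketch rather than a proof.
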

\begin{proof}
We divide into two cases by the parity of the degrees of the vertices.

\vspace{2mm}\noindent{\bf Case 1.} $G$ is not even.

\vspace{2mm} Take a vertex $v$ with $d_G(v)$ being odd. Let $T$ be an odd subtree of $G$ with $d_T(v)=d_G(v)$ such that $|V(T)|$ is as large as possible.
Since $d_T(v)=d_G(v)$,
\begin{equation}
|V(T)|\geq d_G(v)+1\geq \delta(G)+1\geq \frac n 2 +2.
\end{equation}

We claim that $T$ must be a spanning tree of $G$.
Suppose it is not, and let $G_2=G-V(T)$. By the maximality of $T$,
\begin{equation}
|N_G(x)\setminus V(T)|\leq 1\ \text{ for any}\ x\in V(T).
\end{equation}
In addition, since $|V(T)|\geq \frac{n}{2}+2$, we have $|V(G_2)|\leq \frac{n}{2}-2$, and thus $\Delta(G_2)\leq \frac{n}{2}-3$. Hence, for any $y\in V(G_2)$,
\begin{equation}
d_T(y)=d_G(y)-d_{G_2}(y)\geq \delta(G)-\Delta(G_2)\geq (\frac n 2 +1)-(\frac{n}{2}-3)=4.
\end{equation}
Combining (2) and (3), one has
\begin{equation}
4|V(G_2)|\leq e_G(V(T), V(G_2))\leq |V(T)|.
\end{equation}
Associating (4) with $|V(G_2)|+|V(T)|=n$, it gives $|V(G_2)|\leq \frac{n}{5}$. By taking this into the following, we obtain for any $y\in V(G_2)$,
\begin{equation}
d_T(y)=d_G(y)-d_{G_2}(y)\geq \delta(G)-\Delta(G_2)\geq (\frac n 2 +1)-(\frac{n}{5}-1)=\frac{3n}{10}+2.
\end{equation}
Combining (2) and (5), we have
\begin{equation}
(\frac{3n}{10}+2)|V(G_2)|\leq e_G(V(T), V(G_2))\leq |V(T)|.
\end{equation}
Integrating (6) with $|V(G_2)|+|V(T)|=n$,  $|V(G_2)|\leq \frac{10n}{3n+30}<\frac{10}{3}$.
Since both $|V(T)|$ and $n$ are even, $|V(G_2)|$ is even. It follows that $|V(G_2)|=2$.
Let $V(G_2)=\{x, y\}$. Since $\delta(G)\geq \frac{n}{2}+1$,
$|N_G(x)\cap N_G(y)|\geq 2$. Take a vertex $z\in N_G(x)\cap N_G(y)$. Clearly,
$T+\{zx, zy\}$ is a larger odd subtree of $G$ than $T$, a contradiction.
This proves that $T$ is an odd spanning tree of $G$.

\vspace{2mm}\noindent{\bf Case 2.} $G$ is even.

\vspace{2mm}\noindent{\bf Subcase 2.1.} $\Delta(G)>\delta(G)$

\vspace{2mm} Take a vertex $v$ with $d_G(v)=\Delta(G)$ and $v'\in N_G(v)$ and let $H=G-vv'$.
Let $T$ be an odd tree of $H$ with $d_T(v)=d_G(v)-1$ such that $|V(T)|$ is as large as possible.
Since $d_T(v)=d_G(v)-1$,
\begin{equation}
|V(T)|\geq d_G(v)\geq \frac n 2 +2.
\end{equation}

We claim that $T$ is a spanning tree.
Suppose $T$ is not, and let $G_2=H-V(T)$. By the maximality of $T$,
\begin{equation}
|N_{H}(x)\setminus V(T)|\leq 1\ \text{ for any}\ x\in V(T).
\end{equation}

In addition, since $|V(T)|\geq \frac{n}{2}+2$, we have $|V(G_2)|\leq \frac{n}{2}-2$, and thus $\Delta(G_2)\leq \frac{n}{2}-3$. Hence, for any $y\in V(G_2)\setminus \{v'\}$,
\begin{equation}
d_T(y)=d_H(y)-d_{G_2}(y)\geq \delta(G)-\Delta(G_2)\geq (\frac n 2 +1)-(\frac{n}{2}-3)=4.
\end{equation}

\noindent{\bf Subcase 2.1.1.} $v'\in V(G_2)$

\vspace{2mm} Observe that
\begin{equation}
d_T(v')=d_H(v')-d_{G_2}(v')\geq \delta(H)-\Delta(G_2)\geq \frac n 2-(\frac{n}{2}-3)=3.
\end{equation}
Combining (8), (9), and (10), one has
\begin{equation}
4|V(G_2)|-1\leq e_G(V(T), V(G_2))\leq |V(T)|.
\end{equation}
Uniting (11) with $|V(G_2)|+|V(T)|=n$, it gives $|V(G_2)|\leq \frac{n+1}{5}$. By taking this into the following, we obtain for any $y\in V(G_2)\setminus \{v'\}$,
\begin{equation}
d_T(y)=d_H(y)-d_{G_2}(y)\geq \delta(G)-\Delta(G_2)\geq (\frac n 2 +1)-(\frac{n+1}{5}-1)=\frac{3n-2}{10}+2.
\end{equation}
\begin{equation}
d_T(v')\geq \frac{3n-2}{10}+1
\end{equation}
Combining (8), (12), and (13), we have
\begin{equation}
(\frac{3n-2}{10}+2)|V(G_2)|-1\leq e_G(V(T), V(G_2))\leq |V(T)|.
\end{equation}
Joining (14) with $|V(G_2)|+|V(T)|=n$,  $|V(G_2)|\leq \frac{10(n+1)}{3n+28}<\frac{10}{3}$.
Since both $|V(T)|$ and $n$ are even, $|V(G_2)|$ is even. It follows that $|V(G_2)|=2$.
Let $V(G_2)=\{x, v'\}$. Since $\delta(G)\geq \frac{n}{2}+1$,
$|N_G(x)\cap N_G(v')|\geq 2$. Take a vertex $z\in N_G(x)\cap N_G(v')$ distinct from $v$. Clearly,
$T+\{zx, zv'\}$ is a larger odd subtree of $G$ than $T$, a contradiction.
This proves that $T$ is an odd spanning tree of $G$.

\vspace{2mm}\noindent{\bf Subcase 2.1.2.} $v'\in V(T)$

\vspace{2mm} By the assumption, we have $N_G[v]\subseteq V(T)$ and therefore, $|V(T)|\geq \frac{n}{2}+3$.  Hence $|V(G_2)|\leq \frac{n}{2}-3$ and moreover, $\Delta(G_2)\leq \frac{n}{2}-4$. So, for any $y\in V(G_2)$,
\begin{equation}
d_T(y)=d_H(y)-d_{G_2}(y)\geq \delta(G)-\Delta(G_2)\geq (\frac n 2 +1)-(\frac{n}{2}-4)=5.
\end{equation}
Combining (8) and (15), one has
\begin{equation}
5|V(G_2)|\leq e_G(V(T), V(G_2))\leq |V(T)|.
\end{equation}
Merging (16) with $|V(G_2)|+|V(T)|=n$, it gives $|V(G_2)|\leq \frac{n}{6}$. By taking this into the following, we obtain for any $y\in V(G_2)$,
\begin{equation}
d_T(y)=d_H(y)-d_{G_2}(y)\geq \delta(G)-\Delta(G_2)\geq (\frac n 2 +1)-(\frac{n}{6}-1)=\frac{n}{3}+2.
\end{equation}
Combining (8) and (17), we have
\begin{equation}
(\frac{n}{3}+2)|V(G_2)|\leq e_G(V(T), V(G_2))\leq |V(T)|.
\end{equation}
Integrating (18) with $|V(G_2)|+|V(T)|=n$,  $|V(G_2)|\leq \frac{3n}{n+9}<3$.
Since both $v(T)$ and $n$ are even, $v(G_2)$ is even. It follows that $|V(G_2)|=2$.
Let $V(G_2)=\{x, y\}$. Since $\delta(G)\geq \frac{n}{2}+1$,
$|N_G(x)\cap N_G(y)|\geq 2$. Take a vertex $z\in N_G(x)\cap N_G(y)$. Clearly,
$T+\{zx, zy\}$ is a larger odd sub-tree of $G$ than $T$, a contradiction.
This proves that $T$ is an odd spanning tree of $G$.

\vspace{2mm}\noindent{\bf Subcase 2.2.} $\Delta(G)=\delta(G)$

\vspace{2mm}By the assumption, $G$ is $\delta(G)$-regular graph and $\delta(G)$ is even.
Let $vv'\in E(G)$ and $H=G-vv'$ as we did before.
Let $T$ be an odd tree of $H$ with $d_T(v)=d_G(v)-1$ such that $|V(T)|$ is as large as possible.
Since $d_T(v)=d_G(v)-1$,
\begin{equation}
|V(T)|\geq d_G(v)=\frac n 2 +1.
\end{equation}

We claim that $T$ is a spanning tree.
Suppose $T$ is not, and let $G_2=H-V(T)$. By the maximality of $T$,
\begin{equation}
|N_{H}(x)\setminus V(T)|\leq 1\ \text{ for any}\ x\in V(T).
\end{equation}

In addition, since $|V(T)|\geq \frac{n}{2}+1$, we have $|V(G_2)|\leq \frac{n}{2}-1$, and thus $\Delta(G_2)\leq \frac{n}{2}-2$. Hence, for any $y\in V(G_2)\setminus \{v'\}$,
\begin{equation}
d_T(y)=d_H(y)-d_{G_2}(y)\geq d_G(y)-\Delta(G_2)\geq (\frac n 2 +1)-(\frac{n}{2}-2)=3.
\end{equation}

\noindent{\bf Subcase 2.2.1.} $v'\in V(G_2)$

\vspace{2mm} Clearly,
\begin{equation}
d_T(v')=d_H(v')-d_{G_2}(v')\geq \frac n 2-(\frac{n}{2}-2)=2.
\end{equation}
Combining (20), (21), and (22) one has
\begin{equation}
3|V(G_2)|-1\leq e_G(V(T), V(G_2))\leq |V(T)|.
\end{equation}
Linking (23) with $|V(G_2)|+|V(T)|=n$, it gives $|V(G_2)|\leq \frac{n+1}{4}$. By taking this into the following, we obtain for any $y\in V(G_2)\setminus\{v'\}$,
\begin{equation}
d_T(y)=d_H(y)-d_{G_2}(y)\geq (\frac n 2 +1)-(\frac{n+1}{4}-1)=\frac{n+7}{4}.
\end{equation}
\begin{equation}
d_T(v')\geq \frac{n+3} 4.
\end{equation}
Combining (20), (24), and (25), we have
\begin{equation}
\frac{n+7}{4}|V(G_2)|-1\leq e_G(V(T), V(G_2))\leq |V(T)|.
\end{equation}
Joining (26) with $|V(G_2)|+|V(T)|=n$,  $|V(G_2)|\leq \frac{4(n+1)}{n+11}<4$.
Since both $|V(T)|$ and $n$ are even, $|V(G_2)|$ is even. It follows that $|V(G_2)|=2$.
Let $V(G_2)=\{x, v'\}$. Since $\delta(G)\geq \frac{n}{2}+1$,
$|N_G(x)\cap N_G(v')|\geq 2$. Take a vertex $z\in N_G(x)\cap N_G(v')$ distinct from $v$. Clearly,
$T+\{zx, zv'\}$ is a larger odd sub-tree of $G$ than $T$, a contradiction.
This proves that $T$ is an odd spanning tree of $G$.

\vspace{2mm}\noindent{\bf Subcase 2.2.2.} $v'\in V(T)$

\vspace{2mm}Since $|V(T)|\geq \frac{n}{2}+2$, we have $|V(G_2)|\leq \frac{n}{2}-2$, and thus $\Delta(G_2)\leq \frac{n}{2}-3$. Hence, for any $y\in V(G_2)$,
\begin{equation}
d_T(y)=d_H(y)-d_{G_2}(y)\geq (\frac n 2 +1)-(\frac{n}{2}-3)=4.
\end{equation}
Combining (20) and (27), one has
\begin{equation}
4|V(G_2)|\leq e_G(V(T), V(G_2))\leq |V(T)|.
\end{equation}
Integrating (28) with $|V(G_2)|+|V(T)|=n$, it gives $|V(G_2)|\leq \frac{n}{5}$. By taking this into the following, we obtain for any $y\in V(G_2)$,
\begin{equation}
d_T(y)=d_H(y)-d_{G_2}(y)\geq (\frac n 2 +1)-(\frac{n}{5}-1)=\frac{3n}{10}+2.
\end{equation}
Combining (20) and (29), we have
\begin{equation}
(\frac{3n}{10}+2)|V(G_2)|\leq e_G(V(T), V(G_2))\leq |V(T)|.
\end{equation}
Uniting (30) with $|V(G_2)|+|V(T)|=n$,  $|V(G_2)|\leq \frac{10n}{3n+30}<\frac{10}{3}$.
Since both $|V(T)|$ and $n$ are even, $|V(G_2)|$ is even. It follows that $|V(G_2)|=2$.
Let $V(G_2)=\{x, y\}$. Since $\delta(G)\geq \frac{n}{2}+1$,
$|N_G(x)\cap N_G(y)|\geq 2$. Take a vertex $z\in N_G(x)\cap N_G(y)$. Clearly,
$T+\{zx, zy\}$ is a larger odd sub-tree of $G$ than $T$, a contradiction.
This proves that $T$ is an odd spanning tree of $G$.
\end{proof}


\section{\large Split graphs}

A graph $G=(V,E)$ is said to be a {\it split graph} if there exists a partition $(X, Y)$ of $V$ such that $X$ is an independent set and $Y$ is a clique. We will denote such a graph by $G(X\cup Y, E)$.

\begin{theorem}\label{Theorem3.1}
Let $G=G(X\cup Y, E)$ be a connected split graph of an even order, where $X$ is an independent set and $Y$ is a clique. Then $G$ has no odd spanning tree if and only if $d_X(y)\equiv 1~(\hspace{-2.5mm}\mod 2)$ for any $y\in Y$ and $N_X(y)\cap N_X(y')=\emptyset$ for any two vertices $y$, $y'\in Y$.
\end{theorem}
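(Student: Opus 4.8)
The plan rests on a structural observation about split graphs: in $G=G(X\cup Y,E)$ with $Y$ a clique, one tries to build a spanning tree $T$ by first taking a spanning tree $T'$ of the complete graph $G[Y]$ and then hanging each $x\in X$ as a leaf onto one of its neighbours in $Y$, i.e.\ by choosing an assignment $\phi\colon X\to Y$ with $\phi(x)\in N_G(x)$. Such a $T$ satisfies $d_T(x)=1$ for $x\in X$ and $d_T(y)=d_{T'}(y)+|\phi^{-1}(y)|$ for $y\in Y$; conversely, if every vertex of $X$ has a unique neighbour in $Y$, then \emph{every} spanning tree of $G$ arises in this way, with $\phi^{-1}(y)=N_X(y)$ forced. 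Throughout we may assume $|Y|\ge 2$ (if $|Y|=1$ then $G=K_{1,n-1}$, which is handled directly).

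For the ``if'' direction, assume both conditions hold. By connectedness each $x\in X$ has at least one neighbour in $Y$, and by the disjointness of the sets $N_X(y)$ it has exactly one; hence $d_G(x)=1$. Consequently, in any spanning tree $T$ every $x\in X$ is a leaf, $T-X$ is a spanning tree of $G[Y]$, and $d_T(y)=d_{T-X}(y)+d_X(y)$ for all $y\in Y$. If $T$ were odd then, each $d_X(y)$ being odd, $d_{T-X}(y)$ would be even for every $y\in Y$; but the tree $T-X$ on $|Y|\ge 2$ vertices has a leaf, whose degree is $1$, a contradiction.

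For the ``only if'' direction I would prove the contrapositive: if at least one of the two conditions fails, then $G$ has an odd spanning tree (note that then $|Y|\ge 2$ automatically, since both conditions hold when $|Y|=1$). The crux is a realization step: \emph{if $\phi\colon X\to Y$ is any assignment with $\phi(x)\in N_G(x)$ and $b_y:=|\phi^{-1}(y)|$ is even for at least one $y\in Y$, then $G$ has an odd spanning tree.} Indeed, put $Z=\{y\in Y: b_y\ \text{even}\}$; from $\sum_{y\in Y}b_y=|X|$ one gets $|Z|\equiv|Y|-|X|\equiv n\equiv 0\pmod 2$, so $|Z|\ge 2$. Set $d_y=1$ for $y\in Z$ and $d_y=2$ for $y\notin Z$, so that $\sum_y d_y=2|Y|-|Z|\le 2|Y|-2$; by increasing $d_{y_0}$ for a fixed $y_0\in Z$ in steps of $2$ we reach $\sum_y d_y=2|Y|-2$ with all $d_y$ positive and $d_y\equiv b_y+1\pmod 2$. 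Since every sequence of positive integers on the vertex set $Y$ with sum $2|Y|-2$ is the degree sequence of a tree, there is a tree $T'$ on $Y$ (a subgraph of the clique $G[Y]$) with $d_{T'}(y)\equiv b_y+1\pmod 2$ for all $y$, and then $T:=T'\cup\{x\phi(x):x\in X\}$ is a spanning tree with $d_T(x)=1$ and $d_T(y)=d_{T'}(y)+b_y\equiv 1\pmod 2$, i.e.\ an odd spanning tree.

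It remains to supply a suitable $\phi$ when a condition fails. If $d_X(y_0)$ is even for some $y_0\in Y$, let $\phi(x)=y_0$ for every $x\in N_X(y_0)$ and let $\phi(x)$ be an arbitrary neighbour in $Y$ for every other $x$; then $b_{y_0}=d_X(y_0)$ is even. If instead every $d_X(y)$ is odd but $N_X(y_1)\cap N_X(y_2)\ne\emptyset$ for some $y_1\ne y_2$, pick $x_0\in N_X(y_1)\cap N_X(y_2)$, let $\phi(x_0)=y_1$, let $\phi(x)=y_2$ for every $x\in N_X(y_2)\setminus\{x_0\}$, and let $\phi(x)$ be an arbitrary neighbour otherwise; then $b_{y_2}=d_X(y_2)-1$ is even. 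The realization step then completes the argument. I expect the realization step to be the main obstacle --- especially the parity bookkeeping showing $|Z|$ is even, so that a single even $b_y$ already forces $|Z|\ge 2$, together with the use of the classical description of tree degree sequences; the case analysis producing $\phi$ is then routine.
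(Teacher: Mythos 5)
Your proof is correct and follows essentially the same route as the paper: the sufficiency argument (each $x\in X$ is forced to be a leaf, so $T-X$ is a spanning tree of the clique and its leaf would have even degree in $T$) is identical, and your assignment $\phi$ together with the parity count showing that the set $Z$ of even fibres has even cardinality is exactly the paper's ``$Y$-star forest'' device with its Claims 1 and 2. The only cosmetic difference is that you finish by invoking realizability of positive degree sequences summing to $2|Y|-2$ as trees where the paper writes down an explicit tree on the star centers; note that both your argument and the paper's silently require $|Y|\ge 2$, and $|Y|=1$ (which you defer as ``handled directly'') is precisely the degenerate partition where the stated equivalence fails, since $K_{1,n-1}$ of even order is its own odd spanning tree even though both conditions hold.
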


\begin{proof} To show its sufficiency, assume that
$d_X(y)\equiv 1~(\hspace{-2.5mm}\mod 2)$ and $N_X(y)\cap N_X(y')=\emptyset$ for any two vertices $y$, $y'\in Y$. We show that $G$ has no odd spanning tree. Suppose $T$ is an odd spanning tree of $G$.
By the assumption, $T-X$ is a spanning tree of $G-X$. Let $y_i$ be a leaf of $T-X$. So, $d_T(y_i)=d_X(y_i)+1$ is even, which contradicts that $d_T(y_i)$ is odd.

\vspace{2mm} To show its necessity, suppose that $G$ has no odd spanning tree. Let $X=\{x_1, x_2, \ldots, x_s\}$, $Y=\{y_1, y_2, \ldots, y_t\}$. Let $F$ be a spanning forest of $G$ with exactly $t$ components $T_1, \ldots, T_t$, where $T_i$ is isomorphic to a star with $y_i$ as its center for each $i$. We say such a spanning forest of $G$ a {\it $Y$-star forest} of $G$.

\vspace{2mm}\noindent{\bf Claim 1.} Every $Y$-star forest has an even number of components with even size.
\begin{proof}
Let $V_1=\{y\in Y: d_F(y)$ is odd $\}$ and $V_2=Y\setminus V_1$. We show that $|V_2|$ is even.
Note that
$$s=\sum_{i=1}^t d_F(y_j)=\sum_{y\in V_1} d_F(y)+\sum_{y\in V_2} d_F(y),$$
$$t=|V_1|+|V_2|\ \text{and}\ s\equiv t\ (\hspace{-4mm}\mod 2)\ (\text{since}\ s+t\ \text{is even}).$$ If $s$ is odd, then $|V_1|$ is odd, and thus $|V_2|\equiv 0 (\mod 2)$; if $s$ is even, then $|V_1|$ is even, and thus $|V_2|\equiv 0 (\mod 2)$.
\end{proof}

\begin{center}
\scalebox{0.30}[0.30]{\includegraphics{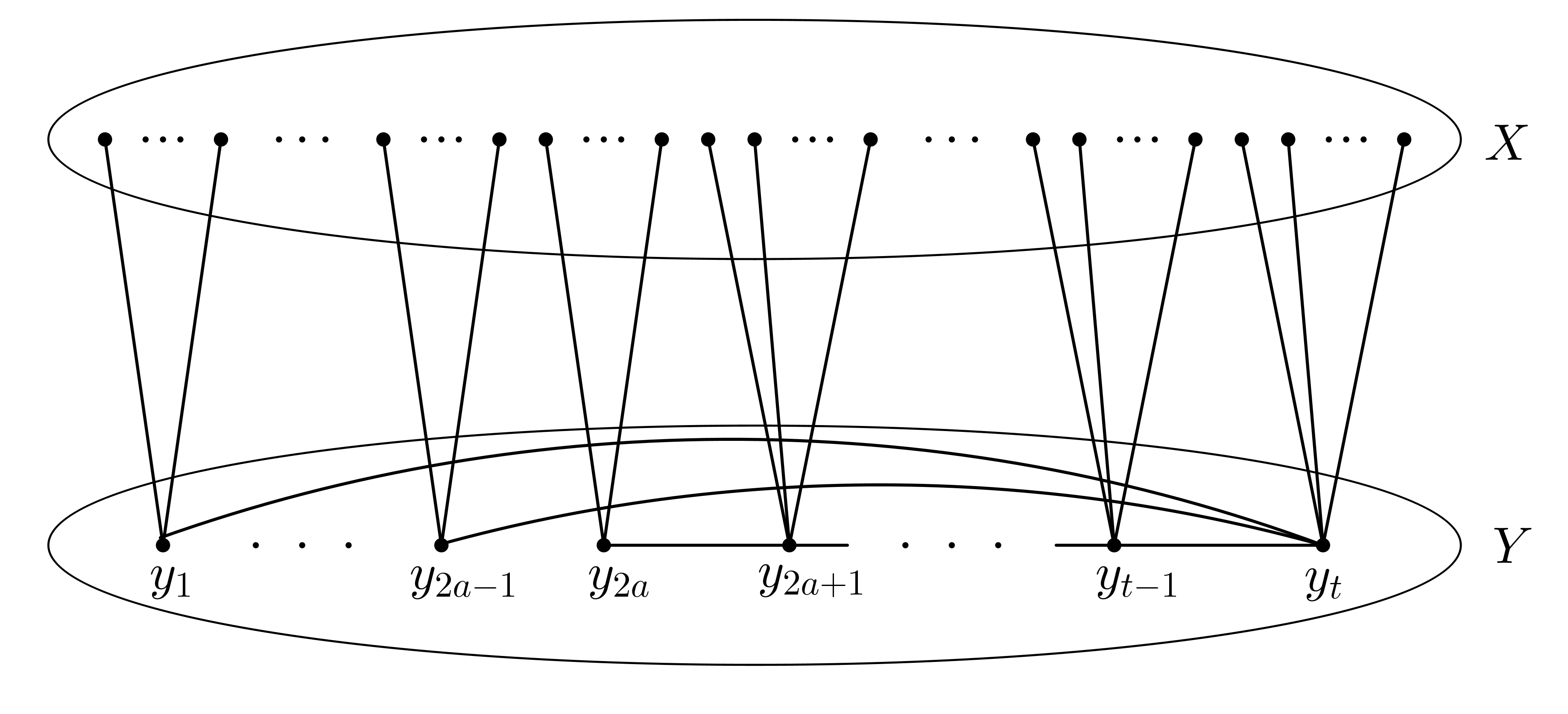}}
\centerline{{\bf Fig. 1}~~$T$.\hypertarget{Fig1}}
\end{center}

\noindent{\bf Claim 2.} If there exists a vertex $y\in Y$ such that
$d_X(y)\equiv 0~(\hspace{-2.5mm}\mod 2)$, or there exist two vertices $y_j$ and $y_k$ in $Y$ such that $N_X(y_j)\cap N_X(y_k)\neq\emptyset$, then $G$ has a $Y$-star forest with a component of even size.
\begin{proof} If there exists a vertex $y\in Y$ such that $d_X(y)\equiv 0~(\hspace{-2.5mm}\mod 2)$,
then $G$ has a $Y$-star forest $F$ with $d_F(y)=d_X(y)$. Clearly, $F$ is a $Y$-star forest, as we desired. In the second case, let $x_i$ be a common neighbor of $y_j$ and $y_k$. Let $F'$ be a $Y$-star-forest of $G-x_i$. Let $F=F'+x_iy_j$ if $d_{F'}(y_j)$ is odd, and otherwise $F=F'+x_iy_k$. Clearly, $F$ is a $Y$-star forest of $G$ as desired.
\end{proof}

\vspace{2mm} By Claims 1 and 2, let $F$ be a $Y$-star forest with an even number of components of even size. Without loss of generality, let $F=\cup_{i=1}^t T_i$, where $T_i$ is a component of even size for each $i\leq 2a$, and $T_i$ is a component of odd size for each $i\geq 2a+1$ for a positive integer $a$.
One can see that $$T=F+\{y_iy_{i+1}: 2a\leq i\leq t-1\}+\{y_jy_t: 1\leq j\leq 2a-1\}$$ is an odd spanning tree of $G$,  as illustrated in \hyperlink{Fig1}{Fig. 1}, a contradiction.
\end{proof}

A tree $T$ is said to a {\it double star} if there are exactly two vertices of degree at least two in $T$.

\begin{corollary}\label{corollary3.2}
Let $G$ be a connected graph with even order. If $diam(G)\geq 4$, then $\overline{G}$ has a spanning odd double star.
\end{corollary}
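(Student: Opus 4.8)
The plan is to exploit the hypothesis $\mathrm{diam}(G)\ge 4$ to locate a pair of vertices that, in the complement, will serve as the two centers of the double star. Pick $u,v\in V(G)$ with $d_G(u,v)=\mathrm{diam}(G)\ge 4$. First I would record the structural consequence of this choice: if $A=N_G(u)$ and $B=N_G(v)$, then $A\cap B=\emptyset$ (a common neighbor would give $d_G(u,v)\le 2$), and moreover no vertex of $A$ is adjacent in $G$ to any vertex of $B$ (such an edge would give $d_G(u,v)\le 3$); also $uv\notin E(G)$. Consequently, in $\overline{G}$, the vertex $u$ is adjacent to every vertex of $V(G)\setminus(A\cup\{u\})$ — in particular to $v$ and to every vertex of $B$ — and $v$ is adjacent in $\overline{G}$ to every vertex of $V(G)\setminus(B\cup\{v\})$, and every vertex of $A$ is adjacent in $\overline{G}$ to every vertex of $B$. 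So $u$ and $v$ will be the centers, $A$ will be the leaves attached to $v$, and $B\cup(\text{the rest})$ will be the leaves attached to $u$.

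The construction of the double star then goes as follows. Let $R=V(G)\setminus(A\cup B\cup\{u,v\})$. Build $T$ by joining $v$ in $\overline{G}$ to every vertex of $A$, joining $u$ in $\overline{G}$ to every vertex of $B\cup R$, and joining $u$ to $v$. Since $u$ is $\overline{G}$-adjacent to everything outside $A\cup\{u\}$ this is legitimate for $B\cup R\cup\{v\}$; since $v$ is $\overline{G}$-adjacent to everything outside $B\cup\{v\}$ this is legitimate for $A$. Every vertex of $V(G)$ is covered, and $T$ is connected and acyclic, hence a spanning tree with exactly the two branch vertices $u,v$: a spanning double star. It remains to adjust it to be \emph{odd}. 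We have $d_T(u)=|B|+|R|+1$ and $d_T(v)=|A|+1$, while each leaf has degree $1$ (already odd). The parity check: $|A|+|B|+|R|+2=n$ is even, so $|A|+|B|+|R|$ is even, whence $d_T(u)+d_T(v)=|A|+|B|+|R|+2$ is even, i.e. $d_T(u)\equiv d_T(v)\pmod 2$. If both are odd we are done. If both are even, I would move one leaf from the larger side to the other center: if $|A|\ge 1$, detach some $a\in A$ from $v$ and attach it to $u$ — this is allowed only if $a$ is $\overline G$-adjacent to $u$, which fails since $a\in A=N_G(u)$; so instead detach some $b\in B\cup R$ from $u$ and attach it to $v$, which is allowed precisely when $b$ is $\overline G$-adjacent to $v$, i.e. $b\notin B$, i.e. $b\in R$.

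The main obstacle, therefore, is the case $d_T(u),d_T(v)$ both even with $R=\emptyset$ (so $V(G)=A\cup B\cup\{u,v\}$ and we cannot shift a leaf across without violating non-adjacency). Here I would argue more carefully: when $R=\emptyset$, $d_T(u)=|B|+1$ and $d_T(v)=|A|+1$ with $|A|+|B|$ even; both even means $|A|,|B|$ both odd. In that situation I would instead re-route through an interior vertex of a shortest $u$–$v$ path, or alternatively pick the diametral pair together with a vertex $w$ on the shortest path at distance $2$ from $u$: since $\mathrm{diam}(G)\ge 4$, $w$ is non-adjacent in $G$ to $v$ (distance $\ge 2$) and we can use $w$ to rebalance the tree while preserving the double-star shape, or enlarge to a three-branch tree and then collapse. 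I expect a short ad hoc argument — possibly just swapping which of the two diametral endpoints plays which role, or using that $|A|,|B|\ge 1$ gives at least $4$ non-leaf-eligible vertices to redistribute among — will close this corner; the bulk of the corollary is the clean complement-adjacency bookkeeping driven by $\mathrm{diam}(G)\ge 4$.
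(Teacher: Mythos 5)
Your construction is sound and takes a genuinely different, more self-contained route than the paper: the paper observes that $\overline{G}$ contains a spanning split subgraph with clique $\{v_0,v_4\}$ (a diametral-type pair at distance $4$) in which $v_2$ is a common neighbour of the two clique vertices, and then invokes the split-graph characterization (Theorem 3.1) to produce the odd spanning tree, which is automatically a double star. You instead build the double star explicitly and fix the parity by hand. Your adjacency bookkeeping is correct: with $A=N_G(u)$, $B=N_G(v)$, $R=V(G)\setminus(A\cup B\cup\{u,v\})$, all the edges $uv$, $ub$ ($b\in B$), $ur$ ($r\in R$), $va$ ($a\in A$) do lie in $\overline{G}$ because $d_G(u,v)\ge 4$, and the parity argument $d_T(u)+d_T(v)=|A|+|B|+|R|+2=n\equiv 0\pmod 2$ correctly reduces everything to the single bad case where both centre degrees are even.

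The one place you leave the argument open --- both degrees even and $R=\emptyset$, so that no leaf can be shifted between the centres --- is in fact vacuous, and you already hold the key fact without using it. Let $u=v_0v_1v_2\cdots v_k=v$ be a shortest $u$--$v$ path, $k\ge 4$. Then $d_G(u,v_2)=2$, so $v_2\notin A$, and $d_G(v_2,v)=k-2\ge 2$, so $v_2\notin B$; hence $v_2\in R$ and $R\ne\emptyset$ always. Thus the leaf-shift of some $r\in R$ from $u$ to $v$ is always available (and both centres keep degree at least $2$ afterwards, since $|A|\ge 1$ and $|B|\ge 1$ by connectedness), which closes the proof. You should replace the list of speculative fallbacks (``re-route through an interior vertex,'' ``enlarge to a three-branch tree and then collapse,'' etc.) with this one-line observation; as written, the proof is not complete, even though nothing in it would fail.
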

\begin{proof}
Let $v_0$ and $v_4$ be two vertices in $G$ with $d_G(v_0,v_4)=4$. Let $v_0v_1v_2v_3v_4$ be a shortest path joining $v_0$ and $v_4$ in $G$. Observe that $\overline{G}$ contains a split graph $G'$ as its spanning subgraph, where $G'=G'(X\cup Y, E)$ with $Y=\{v_0, v_4\}$ being a clique, $X=V(G)\setminus Y$ an independent set, and $E(G')=\{v_0v_4\}\cup E_0\cup E_4\cup E_{04}$, with $E_4=\{v_0'v_4: v_0'\in N_G(v_0)\}$, $E_0=\{v_4'v_0: v_4'\in N_G(v_4)\}$, and $E_{04}=\{vv_0, vv_4: v\in V(G)\setminus (N_G[v_0]\cup N_G[v_4])\}$. Since $v_2\in N_{\overline{G}}(v_0)\cap N_{\overline{G}}(v_4)$, $N_{\overline{G}}(v_0)\cap N_{\overline{G}}(v_4)\neq \emptyset$. By \hyperref[Theorem3.1]{Theorem 3.1}, $G'$ has an odd spanning tree, a spanning a double star. So, $G$ has an odd spanning tree as well.
\end{proof}

%
%
%
%
%

\section{\large Complements of triangle-free graphs}
In this section, we give a necessary and sufficient condition for a graph $G$ whose complement has an odd spanning tree.
Recall that a graph is claw-free if it contains no induced subgraph isomorphic to $K_{1,3}$.
Clearly, If $G$ is a triangle-free, then $\overline{G}$ is claw-free.
Since a graph has a spanning tree if and only if it is connected, it is natural to consider when $\overline{G}$ is connected for a triangle-free graph $G$. For $E'\subseteq E(G)$, $G[E']$ denotes the subgraph of $G$ with $E(G[E'])=E'$ and $V(G[E'])$ is the set of vertices incident with an edge of $E'$ in $G$.

\begin{lemma}\label{lemma4.1}
For a triangle-free graph $G$, $\overline{G}$ is disconnected if and only if $G$ is a complete bipartite graph.
\end{lemma}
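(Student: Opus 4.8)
The plan is to prove both directions directly. For the easy direction, suppose $G$ is a complete bipartite graph, say $G = K_{a,b}$ with parts $A$ and $B$. Then in $\overline{G}$ every edge lies inside $A$ or inside $B$, and there is no edge between $A$ and $B$; hence (assuming both $A$ and $B$ are nonempty) $\overline{G}$ is disconnected, with $A$ and $B$ each a union of components. (If one part is empty then $G$ is edgeless and $\overline{G}$ is complete, but then $G$ is not really a complete bipartite graph in the intended sense, or we treat the trivial case separately.) This half needs essentially no work.

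For the forward direction, assume $\overline{G}$ is disconnected and let $C_1, C_2, \dots, C_k$ ($k \ge 2$) be the vertex sets of its connected components. Since $\overline{G}$ has no edge between distinct $C_i$'s, $G$ contains all edges between distinct $C_i$'s; that is, $G \supseteq K_{|C_1|,\dots,|C_k|}$ as a spanning subgraph (the complete multipartite graph on the parts $C_i$). I would first argue $k = 2$: if $k \ge 3$, pick vertices $u_1 \in C_1, u_2 \in C_2, u_3 \in C_3$; then $u_1u_2, u_2u_3, u_1u_3 \in E(G)$, giving a triangle in $G$, contradicting that $G$ is triangle-free. So there are exactly two components, $C_1$ and $C_2$, and $G$ contains all edges between $C_1$ and $C_2$. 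It remains to show $G$ has \emph{no} edges inside $C_1$ and none inside $C_2$, i.e. $G$ is exactly $K_{|C_1|,|C_2|}$. But that is immediate from triangle-freeness again: if $xy \in E(G)$ with $x, y \in C_1$, then picking any $z \in C_2$ we get $xz, yz \in E(G)$, so $xyz$ is a triangle in $G$. Hence $C_1$ and $C_2$ are independent sets in $G$, and $G = K_{|C_1|,|C_2|}$ is a complete bipartite graph.

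The argument is short and the main (mild) obstacle is just being careful about degenerate cases — in particular whether "complete bipartite graph" is intended to include the case where one side is empty (an independent set, whose complement is complete and connected unless the graph has at most one vertex). Assuming the paper's convention that $K_{m,n}$ has $m,n \ge 1$, the statement is clean as argued above; otherwise one adds a sentence handling the edgeless graph. I expect the write-up to be essentially the two paragraphs above, with the triangle-free condition invoked twice: once to force $k=2$, and once to force each $C_i$ to be independent.
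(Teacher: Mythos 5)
Your proof is correct and follows essentially the same route as the paper: triangle-freeness forces $\overline{G}$ to have exactly two components, and then forces each component to be independent in $G$, so $G$ is complete bipartite. The only difference is that you spell out the final step (an edge inside a component plus a vertex of the other component gives a triangle), which the paper leaves as "otherwise $G$ must contain a triangle."
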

\begin{proof}
If $G\cong K_{s,t}$ for some two positive integers $s$ and $t$, then $\overline{G}$ consists of two disjoint cliques $K_s$ and $K_t$, thus is disconnected.

If $\overline{G}$ is disconnected, then it has exactly two components. Otherwise, it has at least three components. Let $v_1, v_2, v_3$ be three vertices, which belong to distinct components of $\overline{G}$. Clearly, $v_1, v_2, v_3$ are pairwise adjacent in $G$, i.e. they induce a triangle in $G$, a contradiction. So, let $X$ and $Y$ be the vertex set of the two components of $\overline{G}$, respectively. Again, any vertex of $X$ is adjacent to all vertices of $Y$ in $G$. It follows $G[E_G[X,Y]]$ is a complete bipartite graph. Indeed, $G=G[E_G[X,Y]]$. Otherwise, $G$ must contain a triangle.
\end{proof}

For the seek of clarity, we divide triangle-free graphs into two families in terms of whether it is odd or not. We use $C_n$ to denote the cycle of order $n$. The complement of $C_4$ is denoted by $2K_2$.

\begin{theorem}\label{Theorem4.2}
Let $G$ be a triangle-free graph. If $G$ is odd, then $\overline{G}$ has an odd spanning tree if and only if $G\ncong 2K_2$ and $G$ is not a complete bipartite graph.
\end{theorem}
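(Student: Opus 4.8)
The plan is to establish both implications, with the work concentrated in the ``if'' direction, which splits according to how dense $\overline{G}$ is. \emph{Necessity:} suppose $\overline{G}$ has an odd spanning tree. Then $\overline{G}$ is connected, so by \hyperref[lemma4.1]{Lemma 4.1} $G$ is not complete bipartite; and if $G\cong 2K_2$ then $\overline{G}\cong C_4=K_{2,2}$, which has no odd spanning tree by \hyperref[Proposition2.4]{Proposition 2.4}. Hence $G\ncong 2K_2$ and $G$ is not complete bipartite, as required.

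For sufficiency, assume $G$ is triangle-free, odd, $G\ncong 2K_2$, and $G$ is not complete bipartite; set $n=|V(G)|$, which is even since $G$ is odd. By \hyperref[lemma4.1]{Lemma 4.1}, $\overline{G}$ is connected, and since a clique of $G$ has at most two vertices we record $\alpha(\overline{G})\le 2$, so $\overline{G}$ is dense. If $\Delta(G)\le\frac n2-2$, then $\delta(\overline{G})=n-1-\Delta(G)\ge\frac n2+1$, and \hyperref[Theorem2.6]{Theorem 2.6} applied to the connected graph $\overline{G}$ yields an odd spanning tree. So suppose $\Delta(G)\ge\frac n2-1$ and fix $v$ with $d_G(v)=\Delta(G)$; since $G$ is odd, $\Delta(G)$ is odd. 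Put $A=N_G(v)$ and $B=V(G)\setminus(A\cup\{v\})$. As $A$ is independent in $G$, it induces a clique of order $\Delta(G)\ge\frac n2-1$ in $\overline{G}$, while the $\overline{G}$-neighbourhood of $v$ is exactly $B$. Ruling out $G\cong K_{1,n-1}$ (complete bipartite, hence excluded) gives $\Delta(G)\le n-2$, and then parity forces $\Delta(G)\le n-3$ (as $\Delta(G)$ is odd and $n-2$ is even), so $|B|\ge 2$. Moreover, if every vertex of $B$ were adjacent in $G$ to all of $A$, then triangle-freeness would force $B$ to be independent and $G=K_{|A|,\,|B|+1}$ with parts $A$ and $\{v\}\cup B$, contradicting that $G$ is not complete bipartite; so some $b^{*}\in B$ has a $\overline{G}$-neighbour $a^{*}\in A$.

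The remaining task is to build an odd spanning tree of $\overline{G}$ around the large clique $A$: take a subtree of $\overline{G}[A]$ (a path, or a path with the surplus clique-vertices hung as leaves on one vertex), connect $v$ to it through the edges $vb^{*}$ and $b^{*}a^{*}$, and attach the remaining vertices of $\{v\}\cup B$ --- at most $\frac n2+1$ of them --- as leaves, deciding for each pendant which backbone vertex (or which of $v,b^{*}$) receives it so that every degree ends up odd. The degree parities reduce to a short counting argument: the numbers of pendants forced onto $v$ and onto $b^{*}$ must each be corrected by one, using the slack provided by the clique $A$, and one checks this correction is always available once $|A|\ge\frac n2-1$ and $|B|\ge 2$, the only obstructed configurations being $\overline{G}\cong K_{2,2}$ (that is, $G\cong 2K_2$) or a pair of cliques joined by an edge as in \hyperref[Proposition2.5]{Proposition 2.5} (which, one verifies, does not arise when $G$ is odd) --- both excluded. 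Carrying out this bookkeeping, in particular the borderline cases $|B|=2$ and the presence of vertices of $B$ that are ``twins'' of $v$, and verifying that no failure occurs outside the two excluded families, is the main obstacle; the case where $G$ is disconnected is subsumed here (when some component of $G$ is large enough to put us in this subcase) or is handled by \hyperref[Theorem2.6]{Theorem 2.6}.
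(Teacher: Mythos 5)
Your necessity direction is fine and matches the paper's. Your opening reduction for sufficiency is a genuinely nice observation the paper does not make: when $\Delta(G)\le \frac n2-2$ you get $\delta(\overline{G})\ge\frac n2+1$ and Theorem~2.6 finishes immediately. But in the remaining case $\Delta(G)\ge\frac n2-1$ the proposal stops being a proof. The entire content of the theorem is the construction of the odd spanning tree, and you leave exactly that step as ``bookkeeping'' whose feasibility you assert (``one checks this correction is always available'') while simultaneously conceding it ``is the main obstacle.'' This is a genuine gap, not a routine verification: the troublesome vertices are those $b\in B$ that are adjacent in $G$ to all of $A$, whose only $\overline{G}$-neighbours lie in $\{v\}\cup B$; every such vertex must be hung on $v$ or on another $B$-vertex, and the parities at $v$, $b^{*}$ and $a^{*}$ must all be repaired simultaneously. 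Whether the ``slack'' you invoke exists, and exactly which configurations obstruct it, is precisely what has to be proved --- your claim that the only obstructions are $K_{2,2}$ and Proposition~2.5--type graphs is an unproved assertion that is essentially equivalent to the theorem itself.

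For comparison, the paper does not reduce to a maximum-degree vertex at all: it fixes an arbitrary $v$, sets $X=N_G(v)$ (a clique of odd order in $\overline{G}$) and $Y=V(G)\setminus N_G[v]$ (all joined to $v$ in $\overline{G}$), locates a non-edge $x^{*}y^{*}$ of $G$ between the parts, and then splits into cases according to whether $y^{*}$ has a non-neighbour in $Y$, whether $|X|=1$, etc., writing down an explicit odd tree in each case; the arguments use the evenness and claw-freeness of $\overline{G}$ in an essential way (e.g.\ to find the second neighbour $y''$ of $x^{*}$ in Case~2.1, and to force $x'y^{*}\in E(\overline{G})$ or $x'y'\in E(\overline{G})$ in Case~2.2). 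That case analysis is the real work, and it is the part your proposal omits. To salvage your route you would need to carry out an analogous explicit analysis in the regime $\Delta(G)\ge\frac n2-1$; the Theorem~2.6 reduction shrinks the problem but does not dispose of its hard core.
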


\begin{proof}
If $G$ is a complete bipartite graph, then $\overline{G}$ is disconnected, it has no spanning tree, and has no odd spanning tree either. If $G\cong 2K_2$, then $\overline{G}=C_4$. Clearly $C_4$ has no odd spanning tree.

\vspace{2mm} Assume $G$ is a triangle-free graph which is neither a complete bipartite graph nor $2K_2$. By \hyperref[lemma4.1]{Lemma 4.1}, $\overline{G}$ is connected. Fix a vertex $v\in V(G)$. For convenience, let $X=N_G(v)$ and $Y=V(G)\setminus N_G[v]$. Since $G$ is not a complete bipartite graph, there exist two vertices $x^*\in X$ and $y^*\in Y$ with $x^*y^*\notin E(G)$.

\vspace{2mm}\noindent {\bf Case 1.} There exists a vertex $y'\in Y\setminus \{y^*\}$ with $y^*y'\notin E(G)$.

\vspace{2mm} One can find an odd spanning tree $T_1$ of $\overline {G}$ with $$E(T_1)=\{x^*y^*, y^*y'\}\cup \{vy: y\in Y\setminus \{y'\}\}\cup \{x^*x: x\in X\setminus \{x^*\}\}$$ as shown in \hyperlink{Fig2}{Fig. 2(a)}, where $z$ is $y^*$.

\vspace{2mm}\noindent {\bf Case 2.} Any vertex $y\in Y\setminus \{y^*\}$ is adjacent to $y^*$ in $G$.

\vspace{2mm}\noindent {\bf Case 2.1.} $|X|=1$.

\vspace{2mm} We claim that $|Y|\geq 4$. 
If it is not, then $|Y|=2$, and thus the order of $G$ is 4. Since $G$ is odd, the assumption implies $G\cong 2K_2$, a contradiction. Note that $X$ is a clique in $\overline{G}$ with $|X|=d_G(v)$ being odd and $X\setminus \{x^*\} \subseteq N_{\overline{G}}(x^*)$. In addition, $x^*y^*\in E(\overline{G})$. Since $d_{\overline{G}}(x^*)$ is even, $x^*$ has a neighbor, say $y''\in Y$, other than $y^*$,
in $\overline{G}$. Take a vertex $y'\in Y\setminus \{y'', y^*\}$. By the assumption that $y^*y', y^*y''\in E(G)$, and $G$ is triangle-free, it follows that $y'y''\notin E(G)$, i.e. $y'y''\in E(\overline{G})$. Now one can find an odd spanning tree $T_1$ of $\overline{G}$ as shown in \hyperlink{Fig2}{Fig. 2(a)}, where $z$ is $y''$.

\begin{center}
\scalebox{0.30}[0.30]{\includegraphics{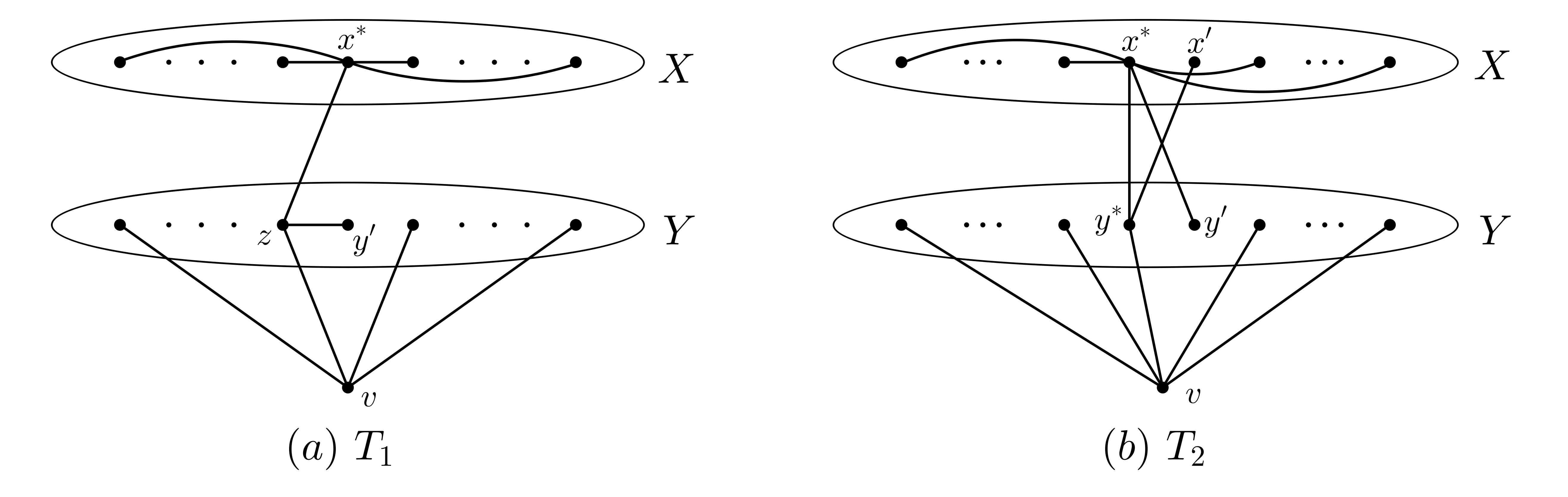}}
\centerline{{\bf Fig. 2}~~$(a)~T_1~\text{and}~(b)~T_2$.\hypertarget{Fig2}}
\end{center}

\vspace{2mm}\noindent {\bf Case 2.2.} $|X|\geq 2$.

\vspace{2mm} Since $|X|$ is odd, $|X|\geq 3$. Since $\overline{G}$ is connected and is even, there exists a vertex $y'\in Y\setminus \{y^*\}$ such that $x^*y'\in E(\overline{G})$. Let $x'\in X\setminus \{x^*\}$. Since $\overline{G}$ is claw-free, $x'$ must be adjacent to either $y^*$ or $y'$ in $\overline{G}$. Without loss of generality, let $x'y^*\in E(\overline{G})$. One can find an odd spanning tree $T_2$ of $\overline{G}$ with $$E(T_2)=\{x^*y^*, x^*y', x'y^*\}\cup \{x^*x: x\in X\setminus\{x^*,x'\}\}\cup \{vy: y\in Y\setminus \{y'\},$$as shown in \hyperlink{Fig2}{Fig. 2(b)}.
\end{proof}

Before proceeding, we introduce some additional notation. Let $C_5(k)$ be the graph by replacing one vertex $v$ of $C_5$ with an independent set of $k$ vertices, which are adjacent to the two neighbors of $v$ in $C_5$, as shown in \hyperlink{Fig3}{Fig. 3(a)}. In addition, $K_{2s, 2t}-e$ denotes the graph obtained from $K_{2s, 2t}$ by deleting an edge.

\begin{theorem}\label{Theorem4.3}
Let $G$ be a triangle-free graph of even order. If $G$ is not odd, then $\overline{G}$ has an odd spanning tree if and only if $G\ncong C_5(2)$ and $G\notin \{K_{2s, 2t}, K_{2s, 2t}-e\}$ for two positive integers $s$ and $t$.
\end{theorem}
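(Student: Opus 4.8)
The plan is to prove the two directions separately.

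\smallskip
\noindent\emph{Necessity.} If $G=K_{2s,2t}$ then $\overline{G}=K_{2s}\cup K_{2t}$ is disconnected and has no spanning tree at all. If $G=K_{2s,2t}-e$ then $\overline{G}$ is obtained from the two vertex-disjoint even-order cliques $K_{2s}$ and $K_{2t}$ by adding one joining edge, so $\overline{G}$ has no odd spanning tree by Proposition~\ref{Proposition2.5}. If $G=C_5(2)$ then $\overline{G}$ has order $6$, maximum degree $3$, and exactly two vertices of degree $2$; an odd tree on six vertices is either $K_{1,5}$ or a double star with two adjacent degree-$3$ centres, and since $\overline{G}$ has no vertex of degree $5$ the two degree-$2$ vertices would have to be leaves of a double star, which forces the two centres to be a fixed pair of vertices that are non-adjacent in $\overline{G}$ --- impossible. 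Hence $\overline{G}$ has no odd spanning tree in each listed case.

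\smallskip
\noindent\emph{Sufficiency, Case A: $G$ has a vertex of odd degree.} Here $\overline{G}$ is connected by Lemma~\ref{lemma4.1}, since $G$ (of even order, having a vertex of odd degree, and not odd) cannot be complete bipartite. Fix a vertex $v$ of odd degree and put $X=N_G(v)$, $Y=V(G)\setminus N_G[v]$, so $X$ is a clique of $\overline{G}$, $v$ is $\overline{G}$-adjacent to all of $Y$, $|X|=d_G(v)$ is odd, $|Y|=n-1-d_G(v)$ is even, $X\neq\varnothing$, and $Y\neq\varnothing$ (otherwise $G=K_{1,n-1}$, which is odd). As $G$ is not complete bipartite there are $x^{*}\in X$ and $y^{*}\in Y$ with $x^{*}y^{*}\notin E(G)$. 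I would then try to build an odd spanning tree of $\overline{G}$ of one of the ``broom'' shapes $T_1,T_2$ from the proof of Theorem~\ref{Theorem4.2}: join $v$ to all but one vertex of $Y$, join $x^{*}$ to the rest of $X$, and bridge the two parts by $x^{*}y^{*}$ together with one more $\overline{G}$-edge at $y^{*}$ (an edge inside $Y$ if $y^{*}$ has a $G$-non-neighbour there; otherwise an edge to a second vertex of $X$, or, when $|X|=1$, a second $\overline{G}$-edge from $x^{*}$ into $Y$). Since $|X|$ is odd and $|Y|$ is even, all of these degrees automatically come out odd. The substance of the case is to show that the only triangle-free $G$ (not odd, of even order) for which no such broom works --- over all admissible choices of $v,x^{*},y^{*}$ --- are $K_{2s,2t}-e$ and $C_5(2)$, both excluded by hypothesis.

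\smallskip
\noindent\emph{Sufficiency, Case B: $G$ is even.} Then $d_{\overline{G}}(u)=n-1-d_G(u)$ is odd for every $u$, so $\overline{G}$ is connected (Lemma~\ref{lemma4.1}, using that $G$ is not complete bipartite) and odd. If $G$ has an isolated vertex $v$, then $v$ dominates $\overline{G}$ and the star centred at $v$ is an odd spanning tree of $\overline{G}$ (centre of odd degree $n-1$, all other vertices of degree $1$). Otherwise $\delta(G)\ge 2$; fix $v$ and take $X,Y$ as above, so $|X|\ge 2$ is even and $|Y|$ is odd. Joining $v$ to all of $Y$ gives $v$ the odd degree $|Y|$; then $X$ (a clique of $\overline{G}$) must be hung off $Y$ through bridging non-edges of $G$, and a parity count forces the attached subforest on $X$ to split into an even number of odd-order subtrees, each bridged by a single edge, with every vertex of $Y$ receiving an even number of bridges. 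Using the clique structure of $X$ one realizes each subtree as a star bridged at its centre and sends both bridges into a common vertex of $Y$ that has two $G$-non-neighbours in $X$; the (small) configurations in which no such vertex of $Y$ exists have to be checked directly and yield no new exceptions.

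\smallskip
\noindent\emph{The main obstacle.} The hard part is the classification of degenerate configurations: verifying that, beyond the complete bipartite graphs already removed by Lemma~\ref{lemma4.1}, the only triangle-free graphs escaping every broom construction in Case~A (and every attachment scheme in Case~B) are $K_{2s,2t}-e$ and the sporadic graph $C_5(2)$. The delicate regime is when $G[Y]$ is so nearly complete that $\overline{G}$ degenerates into two cliques joined by at most one edge --- exactly where $K_{2s,2t}$ and $K_{2s,2t}-e$ intervene --- together with the single six-vertex exception $C_5(2)$.
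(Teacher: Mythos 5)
Your necessity direction is essentially complete and correct: the arguments for $K_{2s,2t}$ (disconnected complement) and $K_{2s,2t}-e$ (via Proposition 2.5) coincide with the paper's, and your degree-sequence argument for $\overline{C_5(2)}$ --- an odd tree on six vertices is $K_{1,5}$ or a double star with two adjacent degree-$3$ centres, and the two degree-$2$ vertices $x_1,x_2$ of $\overline{C_5(2)}$ can only hang on $y_1$ and $y_2$ respectively, which are non-adjacent in $\overline{C_5(2)}$ --- is a clean and valid alternative to the paper's argument.

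The sufficiency direction, however, is a plan rather than a proof, and the plan defers exactly the part that carries all of the difficulty. You state that ``the substance of the case is to show that the only triangle-free $G$ \dots\ for which no such broom works \dots\ are $K_{2s,2t}-e$ and $C_5(2)$,'' and that the degenerate configurations ``have to be checked directly,'' but this classification \emph{is} the hard direction of the theorem, and none of it is carried out. In the paper it occupies the bulk of the proof: one fixes a vertex $y$ of \emph{even} degree (not odd degree as in your Case A), sets $X=N_G(y)$ with $|X|=2s$ and $|Y|=2t$, and then splits on whether some $y'\in Y$ has at least two $\overline{G}$-neighbours in $X$ (tree $T_3$), then on whether $Y$ is independent --- leading to $G\cong K_{2s,2t}\setminus M$ and an explicit verification that $|M|\ge 2$ always yields a tree ($T_5$, $T_6$) --- and finally, when $Y$ is not independent, forces $s=1$, then $E_{\overline{G}}(Y_1,Y_2)=\emptyset$, then $G\cong C_5(n-4)$, then $n=6$, through three claims and trees $T_7$ through $T_{11}$. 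Your Case B is likewise only gestured at (``a parity count forces\dots,'' ``the small configurations \dots\ have to be checked''); note moreover that since an even $G$ can be neither $C_5(2)$ nor $K_{2s,2t}-e$, in that case you must prove that \emph{no} exceptions arise at all, which is again unverified. Until the case analysis is actually executed, with explicit tree constructions or an equivalent argument closing every branch, the sufficiency direction remains unproven.
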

\begin{proof}
To show its necessity, let $s$ and $t$ be any two positive integers.
If $G=K_{2s, 2t}$, then $\overline{G}$ is disconnected, and hence has no spanning (odd) tree.
If $G=K_{2s, 2t}-e$, then by \hyperref[Proposition2.5]{Proposition 2.5}, $\overline{G}$ has no odd spanning tree either. Next we show that $\overline{C_5(2)}$ has no an odd spanning tree.

To see this, first let $V(C_5(2))=\{x_1, x_2\}\cup \{y_i: 1\leq i\leq 4\}$ and $E(C_5(2))=\{x_iy_j: 1\leq i\leq 2, 3\leq j\leq 4\}\cup \{x_1y_2, x_2y_1, y_1y_2\}$.
Suppose $\overline{C_5(2)}$ has an odd spanning tree $T$. Clearly $x_1x_2\notin E(T)$, otherwise, one of $x_1$ or $x_2$ must have degree 2 in $T$. So, both $x_1$ and $x_2$ are leaves of $T$. It implies that $d_T(y_i)=3$. However, it forces that $T$ contains a 4-cycle $y_1y_3y_2y_4y_1$, a contradiction.

\begin{center}
\scalebox{0.30}[0.30]{\includegraphics{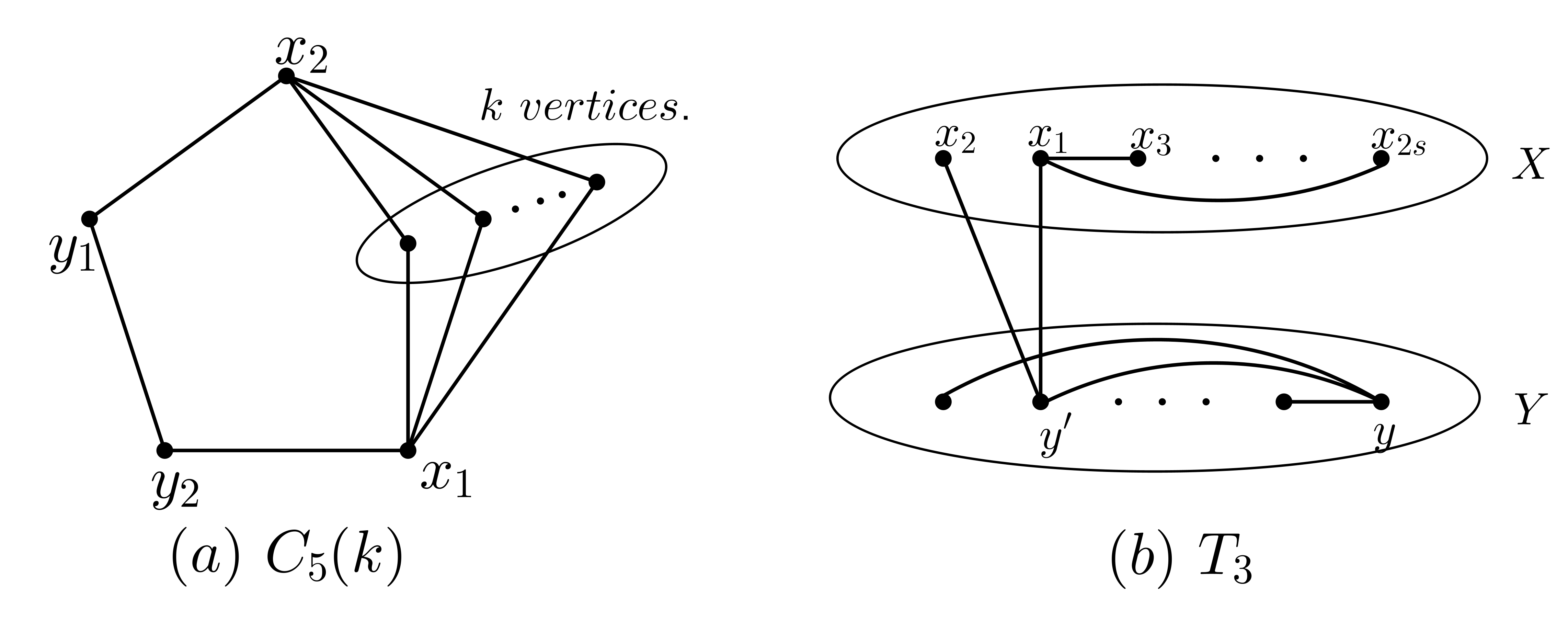}}
\centerline{{\bf Fig. 3}~~$(a)~C_5(k)~\text{and}~(b)\ T_3$.\hypertarget{Fig3}}
\end{center}

\vspace{2mm} Next, assume that $G$ is a triangle-free graph such that $\overline{G}$ has no odd spanning tree. We show that $G\cong C_5(2)$ or $G\in \{K_{2s, 2t}, K_{2s, 2t}-e\}$ for two positive integers $s$ and $t$.
Fix a vertex $y$ of $G$ with $d_G(y)$ being even.
For convenience, let $N_G(y)=X$, $Y=V(G)\setminus X$. Hence $y\in Y$. Since $G$ is triangle-free, $X$ is an independent set in $G$ and thus is a clique in $\overline{G}$. Since $d_G(y)$ is even, $X=\{x_1, x_2, \dots, x_{2s}\}$ and $Y=\{y_1, y_2, \dots, y_{2t}\}$. Trivially, $s>0$, otherwise $\overline{G}$ has a spanning odd star rooted at $y$, contradicting our assumption.

\vspace{2mm}\noindent{\bf Case 1.} There is a vertex $y'\in Y$ with $|N_G(y')\cap X|\leq 2s-2$.

\vspace{2mm} By the assumption, $y'\neq y$ and $|N_{\overline{G}}(y')\cap X|\geq 2$. Since $yy''\in E(\overline{G})$ for any $y''\in Y\setminus\{y\}$, one can find an odd spanning tree $T_3$ of $\overline{G}$ with
$$E(T_3)=\{yy'': y''\in Y\setminus \{y\}\}\cup \{y'x_1, y'x_2\}\cup \{x_1x_i: 3\leq i\leq 2s\},$$ as illustrated in \hyperlink{Fig3}{Fig. 3(b)}, a contradiction.

\vspace{2mm}\noindent{\bf Case 2.} $|N_G(y')\cap X|\geq 2s-1$ for any $y'\in Y$.

\begin{center}
\scalebox{0.30}[0.30]{\includegraphics{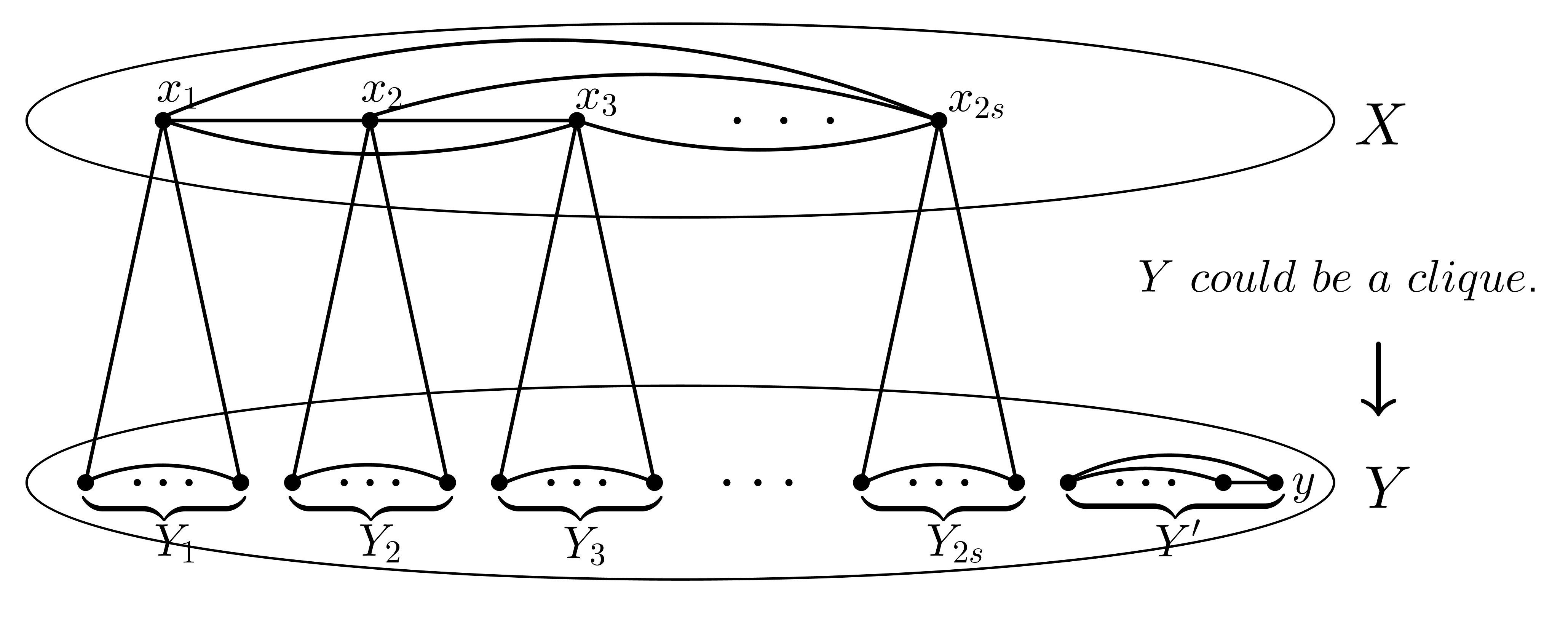}}
\centerline{{\bf Fig. 4}~~$\overline{G}$.\hypertarget{Fig4}}
\end{center}

\vspace{2mm} By the assumption, for any $y'\in Y$, \begin{equation}|N_{\overline{G}}(y')\cap X|\leq 1,\end{equation}.
If $|Y|=2$, then $G\in\{K_{2s,2}, K_{2s,2}-e\}$, as we desired. So, in what follows, assume that $2t=|Y|>2$.
So, it is naturally define $Y_i=N_{\overline{G}}(x_i)\cap Y$ for each $i\in\{1, \ldots, 2s\}$ and
$Y'=\{y': y'x\notin E(\overline{G})$ for all $x\in X\}$, as shown in \hyperlink{Fig4}{Fig. 4}. One can see that $Y'\cap Y_i=\emptyset$ and $Y_i\cap Y_j=\emptyset$ for any $i, j\in\{1,\ldots, 2s\}$. In particular, $Y'\neq \emptyset$ because of $y\in Y'$.
Indeed, $Y\setminus Y_i=N_G(x_i)$ for any $i$. Since $G$ is triangle-free, $Y\setminus Y_i$ is an independent set for each $i\in\{1, \ldots, 2t\}$. It follows that  $Y$ is independent if \begin{equation} \text{either}\ Y_i=\emptyset\ \text{for some}\ i\ \text{or}\ |\{i: 1\leq i\leq 2s\ \text{such that}\ Y_i\neq \emptyset\}|>2.\end{equation}

\vspace{2mm}\noindent{\bf Subcase 2.1.} $Y$ is an independent set of $G$.

\vspace{2mm}\noindent{\bf Subcase 2.1.1.} There exists an integer $i\in \{1, 2,\dots, 2s\}$ with $|Y_i|\geq 2$.

\vspace{2mm} Now one can find an odd spanning tree $T_4$ of $\overline{G}$ with
$$E(T_4)=\{x_iy_i', x_iy_i''\}\cup \{y_i'y': y'\in Y\setminus \{y_i', y_i''\}\cup \{x_ix: x\in X\setminus \{x_i\},$$ as illustrated in \hyperlink{Fig5}{Fig. 5}. This contradicts that $\overline{G}$ has no odd spanning tree.

\begin{center}
\scalebox{0.30}[0.30]{\includegraphics{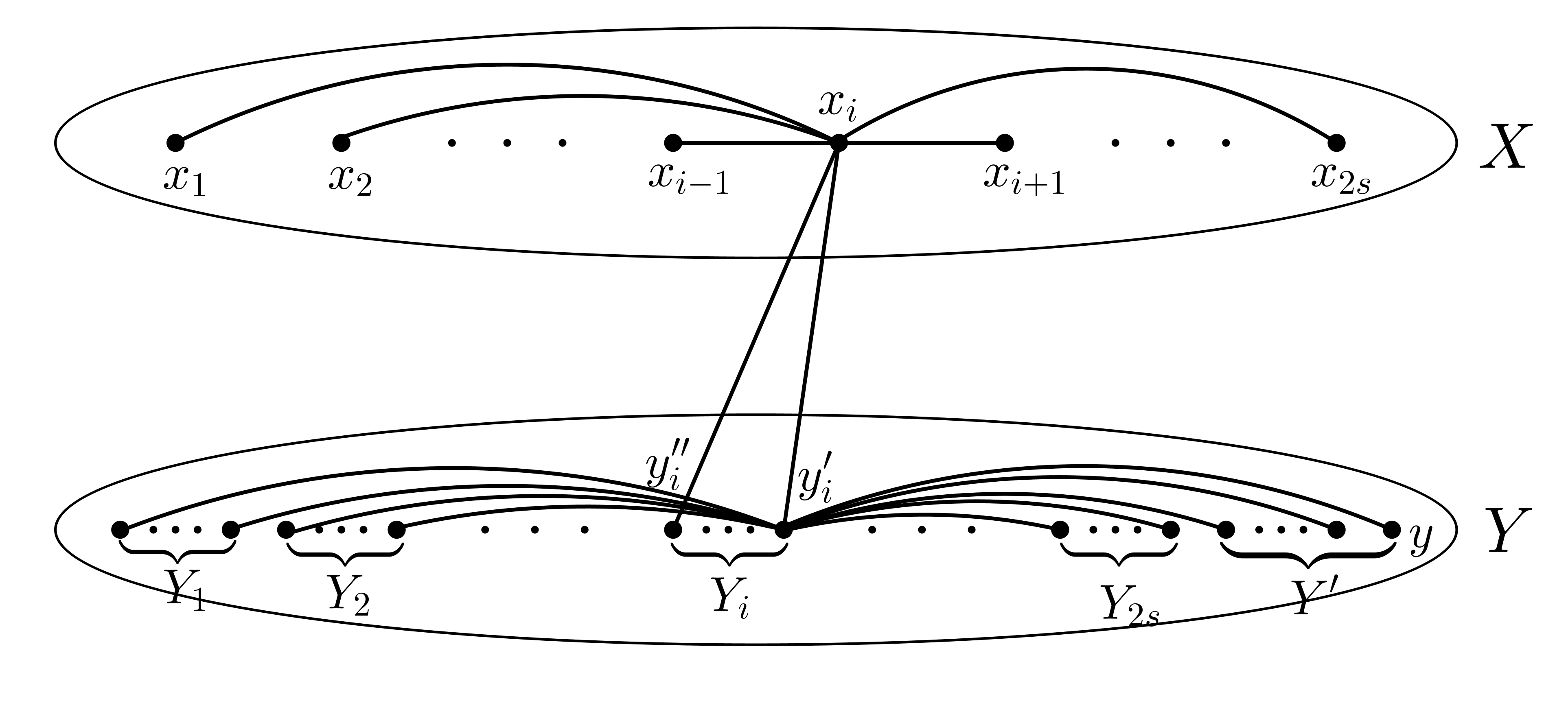}}
\centerline{{\bf Fig. 5}~~$T_4$.\hypertarget{Fig5}}
\end{center}

\vspace{2mm}\noindent{\bf Subcase 2.1.2.} $|Y_i|\leq 1$ for each $i\in \{1, 2,\dots, 2s\}$.

\vspace{2mm} It follows that $G\cong K_{2s,2t}\setminus M$, where $M$ is a matching of $K_{2s,2t}$. To show that $G\in\{K_{2s, 2t}, K_{2s, 2t}-e\}$, it suffices to show that
$|M|\leq 1$. Suppose $|M|\geq 2$. We show that $\overline{G}$ has an odd spanning tree.
Assume that $\{x_1y_1, x_2y_2\}\subseteq M$, without loss of generality.

\vspace{2mm} If $s\geq 2$, then $\overline{G}$ has an odd spanning tree $T_5$ with
$$E(T_5)=\{x_1y_1, x_2y_2\}\cup \{y_1y_i: 3\leq i\leq 2t\}\cup \{x_1x_2,x_2x_3\}\cup\{x_1x_i: 4\leq i\leq 2s\},$$ as shown in \hyperlink{Fig6}{Fig. 6}, a contradiction.

\begin{center}
\scalebox{0.30}[0.30]{\includegraphics{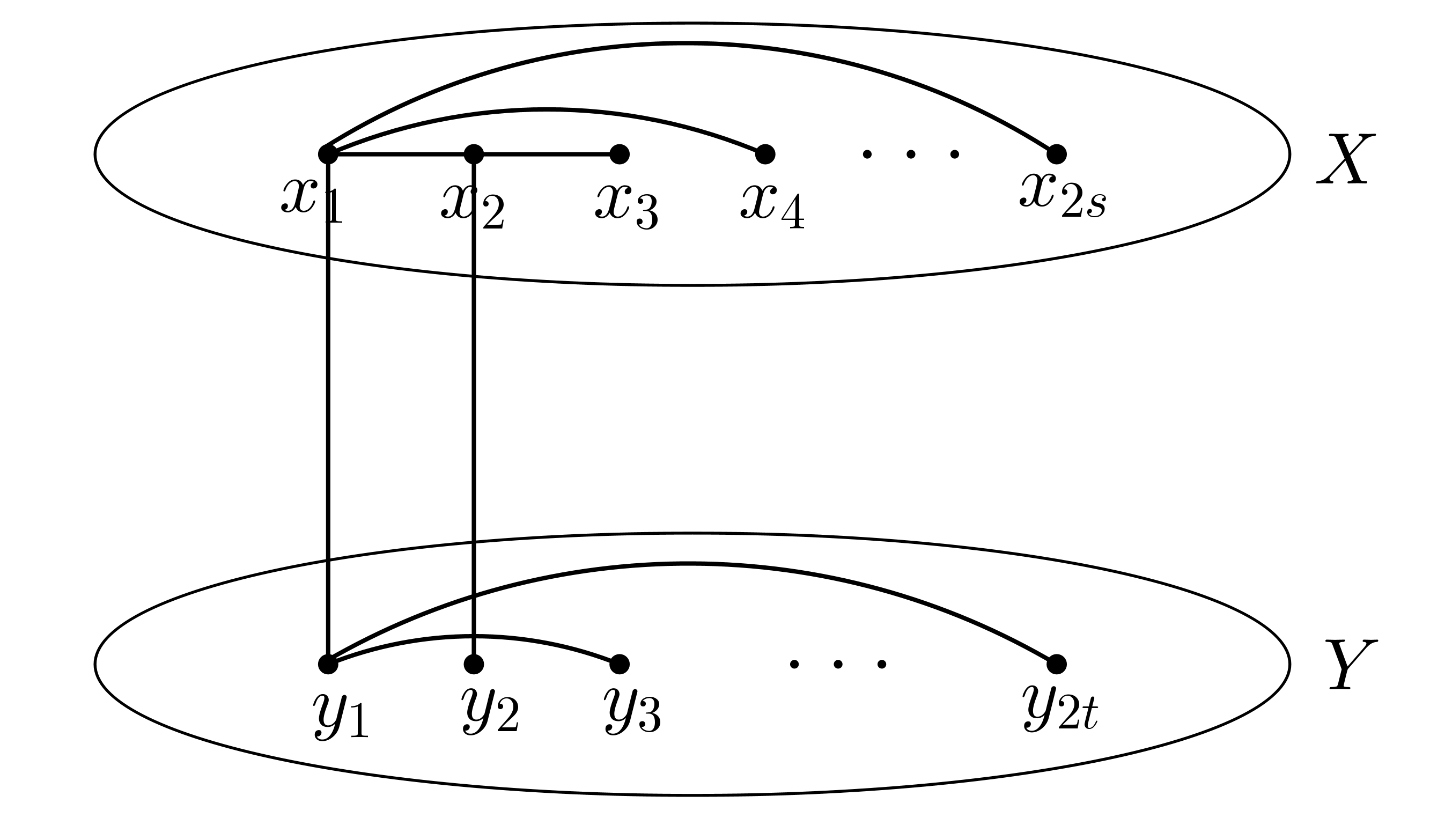}}
\centerline{{\bf Fig. 6}~~$T_5$.\hypertarget{Fig6}}
\end{center}

\vspace{2mm} If $s=1$, then $\overline{G}$ has an odd spanning tree $T_6$ with
$$E(T_6)=\{x_1y_1, x_2y_2\}\cup \{y_1y_2, y_1y_3\}\cup \{y_2y_i: 4\leq i\leq 2t\},$$ as illustrated in \hyperlink{Fig7}{Fig. 7}, a contradiction.

\begin{center}
\scalebox{0.30}[0.30]{\includegraphics{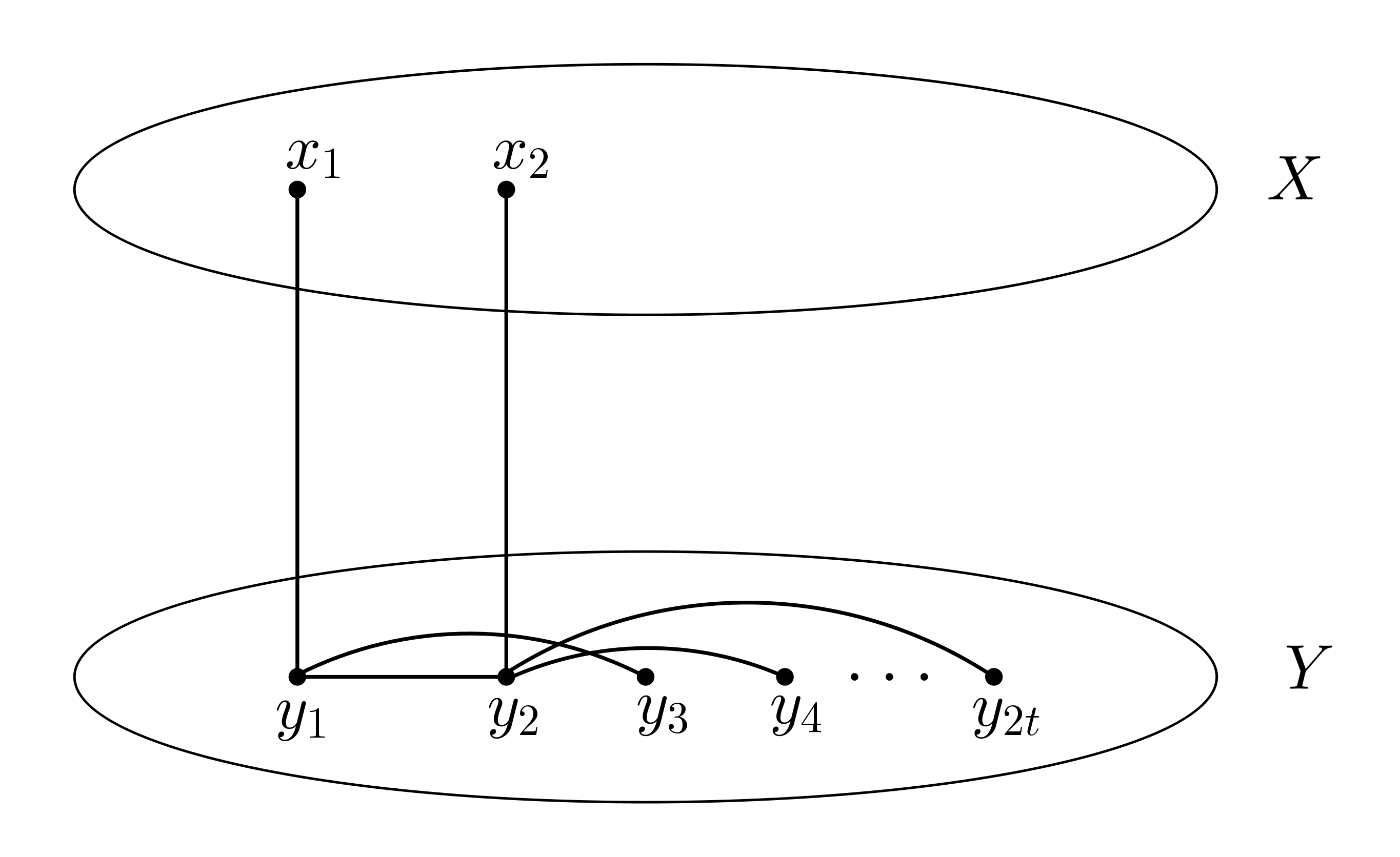}}
\centerline{{\bf Fig. 7}~~$T_6$.\hypertarget{Fig7}}
\end{center}

\vspace{2mm}\noindent{\bf Subcase 2.2.} $Y$ is not an independent set of $G$.

\vspace{2mm} Since $Y$ is not independent,  \begin{equation} Y_i\neq\emptyset\ \text{for any}\ i\ \text{and}\ |\{i: 1\leq i\leq 2s\ \text{such that}\ Y_i\neq \emptyset\}|\leq 2.\end{equation}
It means that $s=1$ and $Y_1\neq \emptyset$ and $Y_2\neq \emptyset$.
Recall that $Y'\neq \emptyset$. Since $Y$ is not independent, $E_G(Y_1, Y_2)\neq \emptyset$.

\vspace{2mm}\noindent {\bf Claim 1.} $E_{\overline{G}}(Y_1, Y_2)=\emptyset$
\begin{proof}
Suppose $E_{\overline{G}}(Y_1, Y_2)\neq\emptyset$, and let $y_1y_2\in E_{\overline{G}}(Y_1, Y_2)$, where $y_1\in Y_1$ and $y_2\in Y_2$.
First assume that $\max\{|Y_1|, |Y_2|\}\geq 2$. Without loss of generality, let $|Y_2|\geq 2$. One can see that $T_7$ is an odd spanning tree of $\overline{G}$, where
$$E(T_7)=\{x_1y_1, y_1y_2, x_2y_2, y_2y_3\}\cup \{yy': y'\in Y\setminus \{y, y_2, y_3\},$$as illustrated in \hyperlink{Fig8}{Fig. 8}, again a contradiction.

\begin{center}
\scalebox{0.30}[0.30]{\includegraphics{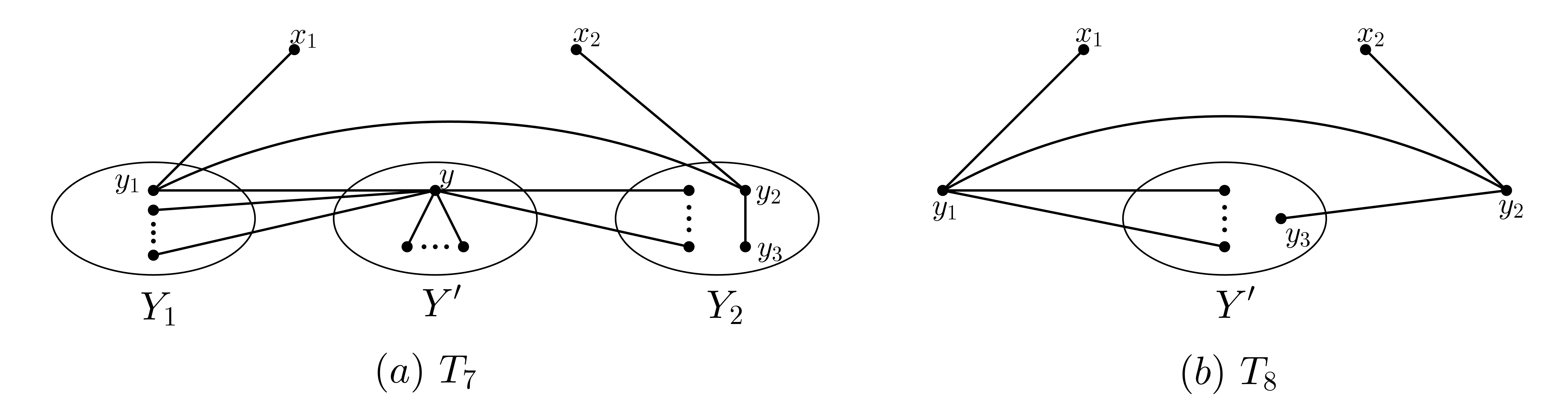}}
\centerline{{\bf Fig. 8}~~$(a)~T_7~\text{and}~(b)~T_8$.\hypertarget{Fig8}}
\end{center}

If $|Y_1|=|Y_2|=1$, then $T_8$ is an odd spanning tree of $\overline{G}$, where
$E(T_8)=\{x_1y_1, y_1y_2, x_2y_2, y_2y_3\}\cup \{y_1y': y'\in Y'\setminus \{y_3\}$, as illustrated in \hyperlink{Fig8}{Fig. 8}. This is a contradiction.
\end{proof}


\begin{center}
\scalebox{0.30}[0.30]{\includegraphics{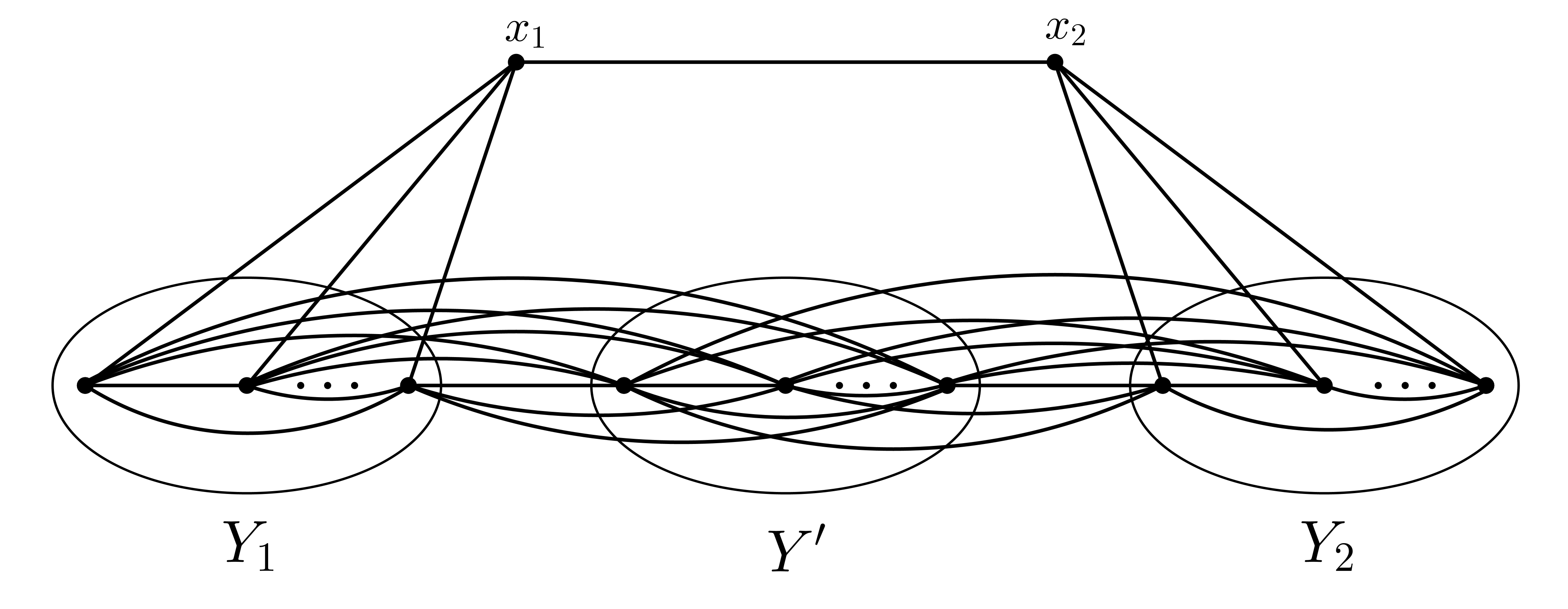}}
\centerline{{\bf Fig. 9}~~$\overline {G}$.\hypertarget{Fig9}}
\end{center}

By Claim 1, $\overline {G}$ is shown in  \hyperlink{Fig9}{Fig. 9}

\vspace{2mm}\noindent {\bf Claim 2.} $G\cong C_5(n-4)$
\begin{proof}

Suppose it is not. There are two possibilities.

\vspace{2mm}\noindent{\bf Subcase 2.2.1.} $\min\{|Y_1|,|Y_2|, |Y'|\}\geq 2$

\vspace{2mm} First assume that one of $|Y_1|$ and $|Y_2|$ is even.
Let $|Y_1|\equiv 0~(\hspace{-2.5mm}\mod 2)$, without loss of generality. One can find that $\overline{G}$ has an odd spanning tree $T_9$ with
\begin{align*}
  E(T_9)= & \{x_1x_2, x_1y_1, x_1y_1', y_1y\}\cup \{y_1y_1'': y_1''\in Y_1\setminus \{y_1, y_1'\}\} \\
   & \cup \{yy_2: y_2\in Y_2\}\cup\{yy'': y''\in Y'\setminus \{y, y'\}\}\cup \{y_1y'\},
\end{align*}
as shown in \hyperlink{Fig10}{Fig. 10}, contradicting our assumption.

\begin{center}
\scalebox{0.30}[0.30]{\includegraphics{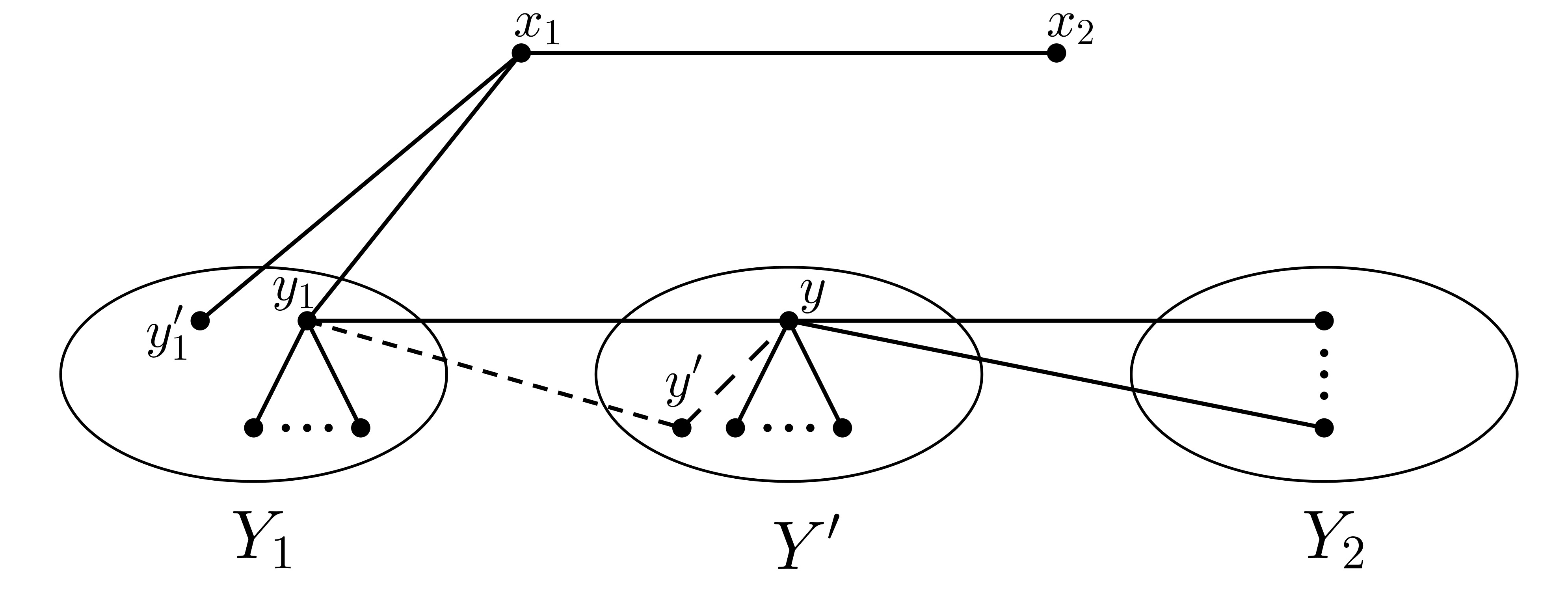}}
\centerline{{\bf Fig. 10}~~$T_9$\ \text{and}\ $T_{10}$.\hypertarget{Fig10}}
\end{center}

\vspace{2mm} If both $|Y_1|$ and $|Y_2|$ are odd,  $\overline{G}$ has an odd spanning tree $T_{10}$ with
$$E(T_{10})=E(T_9)-y_1y'+yy',$$ where $T_9$ is the tree constructed in the previous case, $T_{10}$ is illustrated in \hyperlink{Fig10}{Fig. 10}, a contradiction.

\vspace{2mm}\noindent{\bf Subcase 2.2.2.} Exactly one of $|Y_1|$, $|Y_2|$ and $|Y'|$ equals 1.

\vspace{2mm} First assume that exactly one of $Y_1$ and $Y_2$ has cardinality odd greater than 1. Without loss of generality, let $|Y_1|\equiv 1~(\hspace{-2.5mm}\mod 2)$ and $|Y_1|>1$, then $\overline{G}$ has an odd spanning tree $T_{10}$, as shown in \hyperlink{Fig10}{Fig. 10}, a contradiction.

\vspace{2mm} Now assume that one of $|Y_1|$ and $|Y_2|$ is 1 and the other is even. Without loss of generality, let $|Y_2|=1$. One can see that $T_9$ is an odd spanning tree of $\overline{G}$, as shown in \hyperlink{Fig10}{Fig. 10}, a contradiction.
\end{proof}

\begin{center}
\scalebox{0.30}[0.30]{\includegraphics{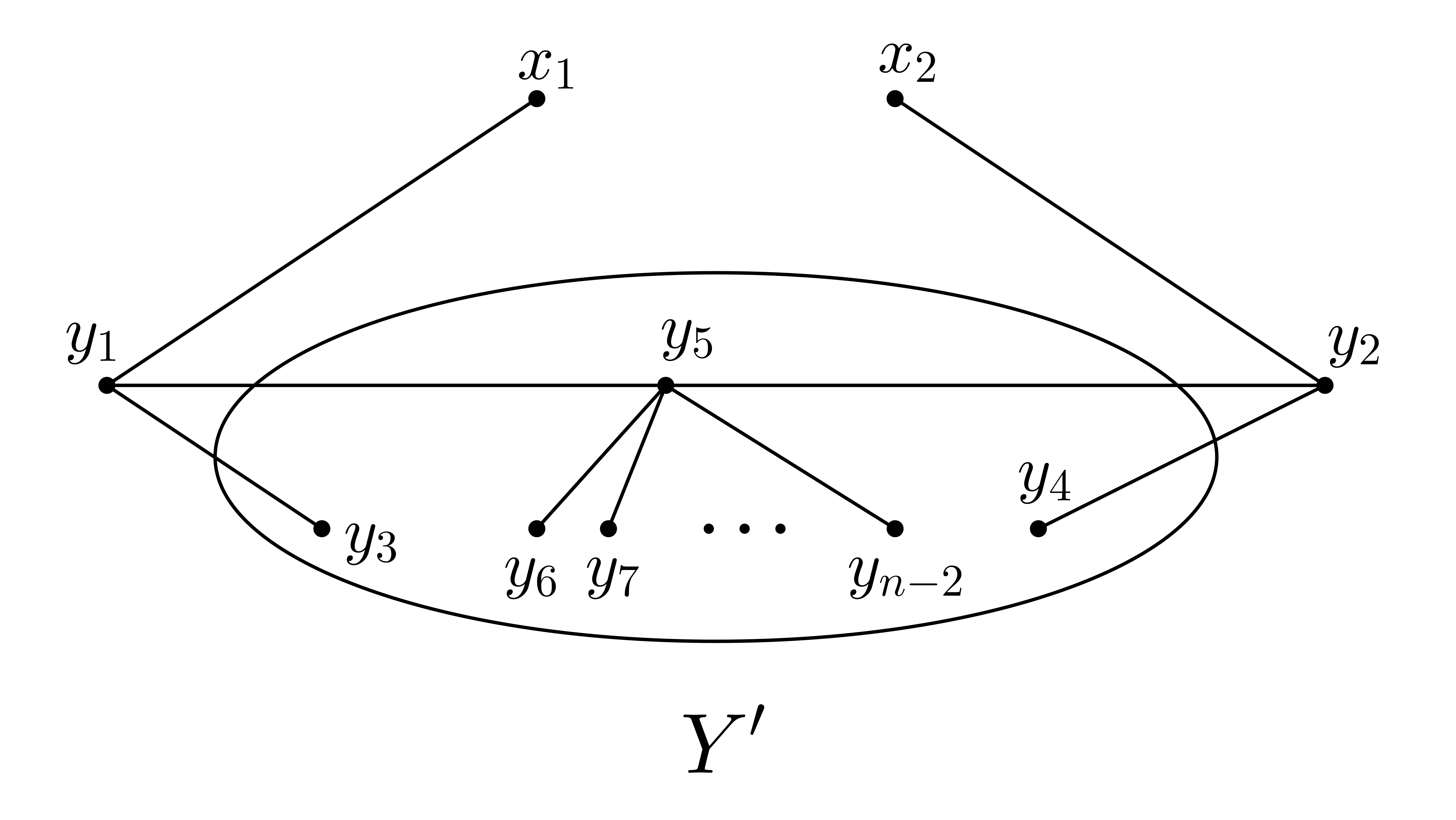}}
\centerline{{\bf Fig. 11}~~$T_{11}$.\hypertarget{Fig11}}
\end{center}

\vspace{2mm}\noindent {\bf Claim 3.} $n=6$
\begin{proof}

\vspace{2mm} By Claim 2, $G\cong C_5(n-4)$. Without loss of generality, let $Y_i=\{y_i\}$ for each $i\in\{1, 2\}$. Let $Y'=\{y_3, \ldots, y_{n-2}\}$. If $n\geq 8$, then $T_7$ is an odd spanning tree of $G$ with $$E(T_{11})=\{x_1y_1, y_1y_3, y_1y_5\}\cup\{y_5y_i: 6\leq i\leq n-2\}\cup \{y_2y_5, y_2y_4, x_2y_2\},$$ as shown in \hyperlink{Fig11}{Fig. 11}. This contradiction shows that $n=6$ i.e., $G\cong C_5(2)$.
\end{proof}

The proof is completed.
\end{proof}

\section{\large Concluding remarks}

In this paper, we present some sufficient conditions for a graph containing an odd spanning tree. For instance, we show that every graph of even order $n$ with $\delta(G)\geq \frac n 2 +1$ has an odd spanning tree. This is best possible, in sense that there are graph of order $n$ with $\frac n 2$ having no odd spanning tree. There are also some other related problems which are worth to be investigated.

(1) When does $\overline{G}$ have an odd spanning tree for a graph $G$ with $diam(G)\geq 3$?

(2) It is well known that the complement of a triangle-free graph is claw-free. We characterize all triangle-free graphs whose complements having an odd spanning tree. It is an interesting problem to characterize all connected claw-free graphs having an odd spanning tree. More specifically, when does a line graph have an odd spanning tree?

(3) We present a minimum degree condition $\delta(G)\geq \frac n 2+1$, which guarantees a graph $G$ of even order $n$ containing an odd spanning tree. Is these condition can be replaced with $d(u)+d(v)\geq n+2$ for any two nonadjacent vertices $u$ and $v$ in $G$, which forces $G$ having an odd spanning tree?

\vspace{3mm} Very recently, Liu and Wu \cite{Liu2025} proved that if $G$ is a graph of order even $n$ with $d(u)+d(v)\geq n+1$ for any two nonadjacent vertices $u$ and $v$ in $G$, then it contains an odd spanning tree.

\vspace{3mm}
\noindent{\bf Data Availability}

\vspace{2mm}No data was used for the research described in the article.

%

\end{document}